\newcommand{\ere}{\mathbb{R}}
\newcommand{\C}{\mathcal{C}}
\newcommand{\Ha}{\mathcal{H}}
\newcommand{\gen}[1]{\left\langle#1\right\rangle}
\newcommand{\norm}[1]{\left\lVert#1\right\rVert}
\newcommand{\llaves}[1]{\left\{#1\right\}}
\newcommand{\seg}[1]{\left[#1\right]}
\newcommand{\rayo}[1]{\left[#1\right\rangle}
\newcommand{\parent}[1]{\left(#1\right)}
\theoremstyle{remark}
 \theoremstyle{definition}
  \providecommand{\definitionname}{Definition}
\theoremstyle{plain}
 \newtheorem{teor}{\protect\theoremname}[section]
  \providecommand{\theoremname}{Theorem}
\theoremstyle{plain}
 \newtheorem*{teor*}{\protect\theoremname}
  \providecommand{\theoremname}{Theorem}
\theoremstyle{plain}
 \newtheorem{lema}[teor]{\protect\lemmaname}
   \providecommand{\lemmaname}{Lemma}
  \theoremstyle{plain}
   \newtheorem*{lema*}{\protect\lemmaname}
   \providecommand{\lemmaname}{Lemma}
  \theoremstyle{plain}
   \providecommand{\propositionname}{Proposition}
  \theoremstyle{plain}
   \newtheorem{cor}[teor]{\protect\corolariumname}
   \providecommand{\corolariumname}{Corollary}
\numberwithin{equation}{section}
\title{\textbf{Orthocentric Systems in Minkowski Planes}}
\author{Tob\'ias Rosas Soto (tjrosas@hotmail.com)\\
Wilson Pacheco Redondo (wpachecoredondo@gmail.com)\\
Departamento de Matemática\\
Facultad Experimental de Ciencias\\
Universidad del Zulia\\
Venezuela}
\begin{document}

\maketitle

\begin{abstract}
A new way to define the notion of $\C$-orthocenter will be displayed by studying some
propierties of four points in the plane which allows to extend the notion of Euler's line,
the Six Point Circles and the three-circles theorem, for normed planes. In the
present paper (which can be regarded as extention of \cite{M-WU} in Minkowski plane in
general) we derive several characterizations of the Euclidianity of the plane.\\

\textit{Key words:} $\C$-Orthocenters, $\C$-Orthocentryc systems, Minkowski planes,
Birkhoff orthogonality, Isosceles orthogonality, Euclidianity.
\end{abstract}

\section{Introduction}
By $(M,\norm{\circ})$ we denote an arbitrary Minkowski plane (i.e., a real
two-dimensional normed linear space) with unit circle $\C$, origin $O$, and norm
$\norm{\circ}$. \mbox{Basic} \mbox{references} to the geometry of Minkowski planes are
\cite{M-SW-WE}, \cite{M-SW},\cite{WU} and the \mbox{monograph} \cite{T}. For any point
$x\in(M,\norm{\circ})$ and any number $\lambda> 0$, the set $\C(x,\lambda) := x + \lambda\C$
is said to be the circle centered at $x$ and having radius $\lambda$. It has been shown by
E. Asplund and B. Gr\"{u}nbaum in \cite{A-G} that the following theorem, which is the
extension of the classical three-circles theorem in the euclidean plane, also holds in
strictly convex planes.

\begin{teor*}
If three circles $\C(x_{1},\lambda)$, $\C(x_{2},\lambda)$, and $\C(x_{3},\lambda)$ pass
through a common point $p_{4}$ and intersect pairwise in the points $p_{1}$, $p_{2}$, and
$p_{3}$, then there exists a circle $\C(x_{4},\lambda)$ such that 
$\{p_{1},p_{2},p_{3}\}\subseteq\C(x_{4},\lambda)$.
\end{teor*}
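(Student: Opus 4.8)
The plan is to exploit the equality of radii to collapse the whole configuration into one short affine identity, and then to exhibit $x_4$ by an explicit formula. First I would translate the figure so that the common point $p_4$ becomes the origin $O$; then $p_4\in\C(x_i,\lambda)$ just says $\norm{x_i}=\lambda$ for $i=1,2,3$, and, relabelling $p_1,p_2,p_3$ if necessary, I may assume that $\{p_4,p_k\}=\C(x_i,\lambda)\cap\C(x_j,\lambda)$ whenever $\{i,j,k\}=\{1,2,3\}$. I will also assume the figure is non-degenerate, i.e. $x_1,x_2,x_3$ are pairwise distinct and $p_4\notin\{p_1,p_2,p_3\}$ (otherwise some pair of the circles coincides or is tangent, and the assertion is trivial or vacuous).

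The heart of the argument is a \textbf{parallelogram lemma}: under the above normalisation, $p_k=x_i+x_j$ for every $\{i,j,k\}=\{1,2,3\}$. To prove it, consider the point reflection $\sigma(z):=x_i+x_j-z$ in the midpoint of the segment $\seg{x_i,x_j}$. Since $\norm{\sigma(z)-\sigma(w)}=\norm{w-z}$, the map $\sigma$ is an isometry of $(M,\norm{\circ})$; because it interchanges $x_i$ and $x_j$, it carries $\C(x_i,\lambda)$ onto $\C(x_j,\lambda)$ and hence permutes $S:=\C(x_i,\lambda)\cap\C(x_j,\lambda)$. In a strictly convex plane two distinct equal-radius circles meet in at most two points (a standard consequence of strict convexity, e.g. via concavity of chord-length functions), so $S=\{p_4,p_k\}$ with $p_4\neq p_k$. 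The only fixed point of $\sigma$ is $\tfrac12(x_i+x_j)$, and $p_4=O$ cannot be that point: if it were, then $\norm{x_i-x_j}=2\lambda$, and for each $q\in S$ one would get $\norm{q-x_i}+\norm{x_j-q}=2\lambda=\norm{x_i-x_j}$, which by strict convexity forces $q\in\seg{x_i,x_j}$ and hence $q=\tfrac12(x_i+x_j)$, making $S$ a singleton — a contradiction. Therefore $\sigma$ does not fix $p_4$; since it permutes the two-element set $\{p_4,p_k\}$, it must interchange its elements, and so $p_k=\sigma(p_4)=\sigma(O)=x_i+x_j$.

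Granting the lemma, the theorem follows by a one-line computation: set $x_4:=x_1+x_2+x_3$. For each $i$, with $\{j,k\}=\{1,2,3\}\setminus\{i\}$ and $p_i=x_j+x_k$, we get $p_i-x_4=(x_j+x_k)-(x_1+x_2+x_3)=-x_i$, so $\norm{p_i-x_4}=\norm{x_i}=\lambda$; that is, $\{p_1,p_2,p_3\}\subseteq\C(x_4,\lambda)$. Undoing the initial translation shows that $x_4=x_1+x_2+x_3-2p_4$ is the required centre in the original coordinates.

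The step I expect to be the only real obstacle is the parallelogram lemma — specifically, the passage from ``the four segments $p_4x_i,\ p_4x_j,\ p_kx_i,\ p_kx_j$ all have length $\lambda$'' to the affine conclusion $p_k=x_i+x_j$. This is exactly where strict convexity must enter: it is what guarantees that $S$ has at most two points, so that the isometry $\sigma$, which a priori merely permutes $S$, is pinned down to the transposition $p_4\leftrightarrow p_k$. In a non–strictly convex plane two equal-radius circles can share a whole arc, $S$ need not be finite, and the statement genuinely fails, so any correct proof is obliged to use strict convexity precisely at this point.
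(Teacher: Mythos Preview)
Your argument is correct: the reflection $\sigma$ in the midpoint of $\seg{x_i,x_j}$ swaps the two circles, strict convexity forces their intersection to have exactly two points, and ruling out the fixed-point case pins down $p_k=x_i+x_j-p_4$; the formula $x_4=x_1+x_2+x_3-2p_4$ then does the job. This is essentially the classical Asplund--Gr\"{u}nbaum argument.

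As for comparison with the paper: the paper does \emph{not} prove this theorem. It is stated in the introduction as a known result and attributed to \cite{A-G}. What the paper does instead, in Theorem~\ref{teor1} and Corollary~\ref{corl1}, is take your parallelogram identity $p_i=x_j+x_k-p_4$ (equation~(\ref{eq1}) there) as the \emph{definition} of $p_i$, via $p_i:=S_{m_i}(p_4)$, and then derive $x_4=x_1+x_2+x_3-2p_4$ purely affinely. The whole point of that reformulation is to bypass the step you identified as ``the only real obstacle'': by defining the $p_i$ through reflection rather than through pairwise circle intersections, the paper avoids needing strict convexity and obtains a notion of $\C$-orthocenter that makes sense in an arbitrary Minkowski plane. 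So you and the paper arrive at the same affine identities, but from opposite directions --- you deduce them from the three-circles hypothesis using strict convexity, while the paper postulates them to make the three-circles picture survive without it.
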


The point $p_{4}$, in the above theorem, is called the $\C$-orthocenter of the triangle
$\triangle p_{1} p_{2} p_{3}$, and it is also evident that $p_{i}$ is the $\C$-orthocenter of
the triangle $\triangle p_{j} p_{k} p_{l}$, where $\{i, j, k, l\} = \{1, 2, 3, 4\}$.
For these reason the set of four points $\{p_{1},p_{2},p_{3},p_{4}\}$ is called a
$\C$-orthocentric system. Then, it is clear that the set of the circumcenters $x_{l}$ of the
triangles $\triangle p_{i}p_{j}p_{k}$ is a $\C$-orthocentric system too (see Theorem 2.9 in
\cite{M-SP}).

The notion of $\C$-orthocenter is define in strictly convex Minkowski plane because the
strictly convex property guarantees that there exists only one circle passing through the
three different points, i.e, each triangle with circumcenter has one and only one
\mbox{circumcenter}. Note that the way Asplund and Gr\"{u}nbaum used to define the notion of
$\C$-orthocenter needs three different circles of the circumcircle. However, there are cases
where no three different circles exist of the circumcircle that holds the hypothesis
of the previous theorem and therefore, in this situation we have a problem with this way to
define the $\C$-orthocenter. Another problem with the definition of Asplund and
Gr\"{u}nbaum is that there are cases where the intersection of the three circles is more
than one point and therefore, how to known which point is the $\C$-orthocenter of the
triangle.

On the other hand, in Minkowski planes there are cases where a triangle can have many
\mbox{circumcenters} or not have it and therefore, the same happen with the
\mbox{$\C$-orthocenters}, but as our intention is to characterize the euclidianity in
Minkowski planes, by studying some geometric properties of the $\C$-orthocentric systems, we
will \mbox{focus} on defining the notion of $\C$-orthocenter for triangles having
circumcenter. The problem here is how to decide which \mbox{$\C$-orthocenter} is associates
with a particular circumcenter, when the triangle has many circumcenters.

Note that in the above theorem one can see that $x_{4}$ is the circumcenter of the triangle
$\triangle p_{1}p_{2}p_{3}$ and the points $x_{1},x_{2},x_{3}$, centers of the circles, form
a triangle where $p_{4}$ is its circumcenter. Furthermore, we can see that the triangles
$\triangle p_{1}p_{2}p_{3}$ and $\triangle x_{1}x_{2}x_{3}$ are symmetric with respect to
one point $q$ (see \textit{Figure \ref{FG1}}). Following this ideas we will introduce in the
section three a new way to define the notion of $\C$-orthocenter and $\C$-orthocentric
systems.

\begin{figure}[h!]
 \begin{center}
  \includegraphics[scale=0.22]{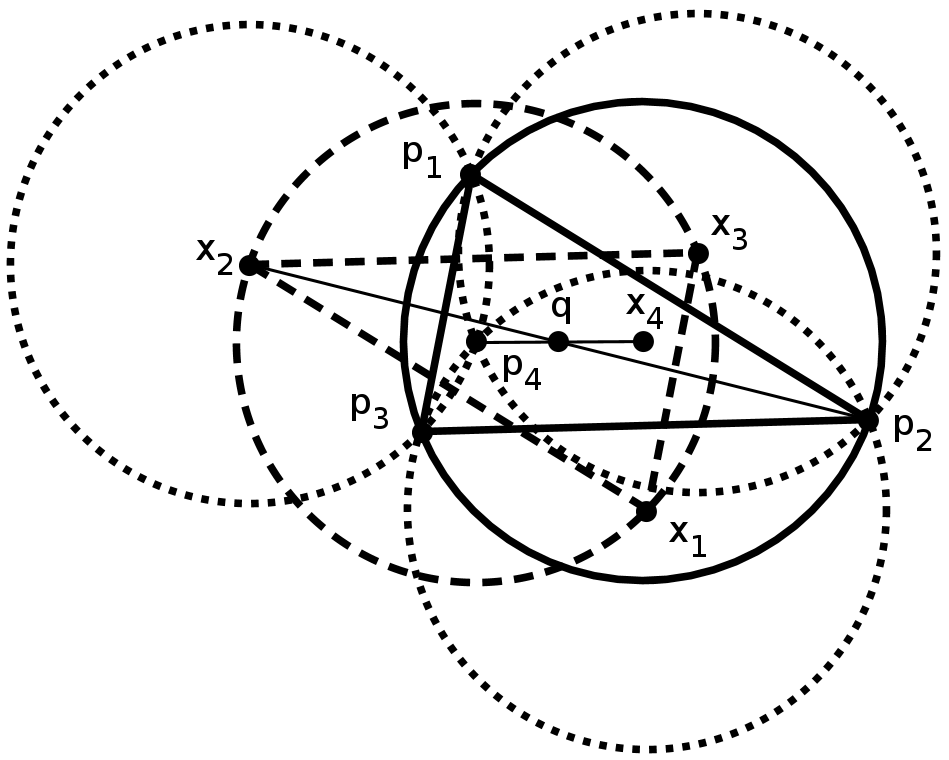}\; \includegraphics[scale=0.4]{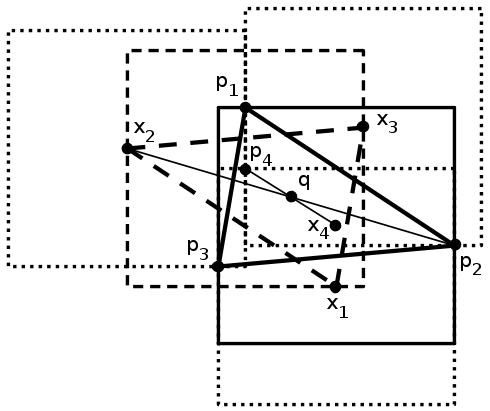}
  \caption{\label{FG1} \textit{$\C$-orthocenter}}
 \end{center}
\end{figure}


\section{Notations and Some Lemmas}

For our discussion, define
\mbox{$\C(\triangle abc)=\llaves{x\in M:\norm{x-a}=\norm{x-b}=\norm{x-c}}$}
with $M$ a Minkowski plane, as the set of circumcenters of the triangle $\triangle abc$.
On the other hand, we denote by $H_{p,-2}$ and $S_{p}$, the  homothety with center $p$
and ratio $-2$ and the the symmetry with center $p$ respectively, they are defined by
$H_{p,-2}(w)=3p-2w$ and $S_{p}(w)=2p-w$. Note that $S_{p}(w)-S_{p}(v)=v-w$. Thus, the
symmetries are isometries in Minkowski planes, i.e., $\norm{S_{p}(w)-S_{p}(v)}=\norm{w-v}$
for all $w,v\in M$.

For $x\neq y$, we denote by $\gen{x,y}$ the line passing through $x$ and $y$, by $\seg{x,y}$
the segment between $x$ and $y$, and by $\rayo{x,y}$ the ray with starting point $x$ passing
through $y$. For any two different points $x,y\in\C(w,\lambda)$ with $y\neq x$, the
line $\gen{x,y}$ divides the plane into two half-planes, $L_{+}$ and $L_{-}$, and therefore
it divides $\C(w,\lambda)$ into two arcs between the points $x$ and $y$,
$\C(w,\lambda)\cap L_{+}$ and $\C(w,\lambda)\cap L_{-}$. They will be denote by
$Arc^{+}_{\C(w,\lambda)}(x,y)$ and $Arc^{-}_{\C(w,\lambda)}(x,y)$. We
need the definitions of Isosceles orthogonality, Birkhoff orthogonality, and Busemann
angular bisectors. Let $x,y\in(M,\norm{\circ})$. The point $x$ is said to be isosceles
orthogonal to $y$ if $\norm{x+y}=\norm{x-y}$, and in this case we write $x \bot_{I}y$
(cf. \cite{AL}). On the other hand, $x$ is said to be Birkhoff orthogonal to $y$ if
$\norm{x+ty}\geq\norm{x}$ holds for all $t\in\ere$, and for this we write $x\bot_{B}y$. We
refer to \cite{A} and \cite{AL} for basic properties of isosceles orthogonality and Birkhoff
orthogonality, and the relations between them.

For non-collinear rays $\rayo{p,a}$ and $\rayo{p,b}$, the ray
\begin{equation*}
 \rayo{p,\frac{1}{2}\left(\frac{a-p}{\norm{a-p}}+\frac{b-p}{\norm{b-p}}\right)+p}
\end{equation*}
is called the Busemann angular bisector of the angle spanned by $\rayo{p,a}$ and
$\rayo{p,b}$, and it is denoted by $A_{B}\parent{\rayo{p,a},\rayo{p,b}}$ (cf. \cite{B},
\cite{D2}). It is trivial to see that when $\norm{a-p}=\norm{b-p}$, then
\begin{equation*}
 A_{B}\parent{\rayo{p,a},\rayo{p,b}}=\rayo{p,\frac{a+b}{2}}
 \end{equation*}

The followings lemmas are needed for our investigation.

\begin{lema}\label{lepre1}
 (cf. [9, (2.4)])If for any $x,y\in(M,\C)$ with $x\bot_{I}y$ there exists a number $t>1$
 such that $x\bot_{I}ty$, then $(M,\C)$ is Euclidean.
\end{lema}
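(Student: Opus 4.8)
The plan is to extract from the hypothesis the (global) implication ``$x\bot_I y\Rightarrow x\bot_B y$'' and then to invoke the classical characterization, recalled in the statement, that a Minkowski plane in which isosceles orthogonality implies Birkhoff orthogonality is Euclidean (cf. also \cite{A}, \cite{AL} for the relations between these two orthogonalities).

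First I would fix $x,y\in M$ with $x\bot_I y$ and $y\neq O$, and consider the set $S:=\{t\in\ere:\ x\bot_I ty\}$. Since $t\mapsto\norm{x+ty}-\norm{x-ty}$ is continuous, $S$ is closed; it is symmetric and contains $\pm 1$. If $S$ had a largest element $t^{*}$ (necessarily $t^{*}\geq 1$), then applying the hypothesis to the pair $(x,t^{*}y)$ would produce $t>1$ with $x\bot_I (tt^{*})y$, i.e. $tt^{*}\in S$ and $tt^{*}>t^{*}$, a contradiction. Hence the nonempty closed set $S$ has no maximum, so it is unbounded above, and we obtain a sequence $s_{n}\to+\infty$ with $x\bot_I s_{n}y$.

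The next step is a limiting argument. Rewriting $x\bot_I s_{n}y$ as $\norm{\tfrac{1}{s_{n}}x+y}=\norm{\tfrac{1}{s_{n}}x-y}$ and setting $\varepsilon_{n}=1/s_{n}\to 0^{+}$, this reads $\psi(\varepsilon_{n})=\psi(-\varepsilon_{n})$, where $\psi(r):=\norm{y+rx}$ is convex. Dividing $\psi(\varepsilon_{n})-\psi(0)=\psi(-\varepsilon_{n})-\psi(0)$ by $\varepsilon_{n}$ and letting $n\to\infty$ gives $\psi'_{+}(0)=-\psi'_{-}(0)$; by convexity $\psi'_{-}(0)\leq\psi'_{+}(0)$, so $\psi'_{-}(0)\leq 0\leq\psi'_{+}(0)$, i.e. $0$ minimizes $\psi$, which is exactly $y\bot_B x$. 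Since isosceles orthogonality is symmetric, this proves $u\bot_I v\Rightarrow u\bot_B v$ for all $u,v\in M$, and the quoted characterization then forces $(M,\norm{\circ})$ to be Euclidean. (Applying the hypothesis also to the pair $(ty,x)$ shows that $\inf(S\cap(0,\infty))=0$, which would likewise yield $x\bot_B y$, but this is not needed.)

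The only routine ingredients are the existence of one-sided derivatives of a convex function and the evaluation of difference quotients at $0$ (no Taylor remainder is required). The substantive input is the cited planar fact ``$\bot_I\Rightarrow\bot_B$ implies Euclidean'': if one wanted a self-contained proof, the main obstacle would be to establish it, namely to show that the inclusion $\{x:x\bot_I y\}\subseteq\{x:x\bot_B y\}$ for all $y$ — the right-hand set lying on a single line through $O$ when $\C$ is smooth and strictly convex — forces the isosceles-orthogonality sets to be lines, hence $\bot_I$ to be homogeneous and $\C$ to be an ellipse.
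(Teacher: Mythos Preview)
The paper does not actually prove this lemma: it is quoted from reference~[9] (Martini--Swanepoel, item~(2.4)) with no proof given, so there is no ``paper's own proof'' to compare against.

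Your argument is correct. The iteration step---applying the hypothesis to the pair $(x,t^{*}y)$ to contradict maximality of $t^{*}$ in the closed set $S$---is sound (note $t^{*}\geq 1>0$, so $tt^{*}>t^{*}$), and the convex-analysis step is clean: from $\psi(\varepsilon_{n})=\psi(-\varepsilon_{n})$ with $\varepsilon_{n}\to 0^{+}$ you correctly extract $\psi'_{+}(0)=-\psi'_{-}(0)$, which together with $\psi'_{-}(0)\leq\psi'_{+}(0)$ forces $\psi'_{-}(0)\leq 0\leq\psi'_{+}(0)$, i.e.\ $0$ minimizes $\psi$ and $y\bot_{B}x$. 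The symmetry of $\bot_{I}$ then upgrades this to the global implication $u\bot_{I}v\Rightarrow u\bot_{B}v$, and you finish by invoking Lemma~\ref{lepre2}. Since Lemma~\ref{lepre2} is itself only cited (from~[1]) and not proved in the paper, your proof effectively reduces one quoted black box to the other; that is a legitimate and tidy dependency, though---as you yourself note in the final paragraph---not a fully self-contained argument.
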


\begin{lema}\label{lepre2}(cf. [1, (10.9)])
 A Minkowski plane $(M,\norm{\circ})$ is Euclidean if and only if the implication
 \begin{equation*}
  x\bot_{I}y\Longrightarrow x\bot_{B}y
 \end{equation*}
holds for all $x,y\in M$.
\end{lema}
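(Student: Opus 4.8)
The plan is to prove the two implications separately, the nontrivial one being a reduction to Lemma~\ref{lepre1}.

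For the ``only if'' part, if $(M,\norm{\circ})$ is Euclidean with inner product $\gen{\cdot,\cdot}$, then $x\bot_{I}y$ is equivalent to $\norm{x+y}^{2}=\norm{x-y}^{2}$, hence to $\gen{x,y}=0$, which in turn is equivalent to $x\bot_{B}y$; so the implication (indeed an equivalence) holds. I therefore concentrate on the ``if'' part.

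So assume $x\bot_{I}y\Rightarrow x\bot_{B}y$ for all $x,y\in M$. By Lemma~\ref{lepre1} it suffices to show that for every pair $x,y$ with $x\bot_{I}y$ there is a number $t>1$ with $x\bot_{I}ty$. The first step is a soft existence fact: for every $x\neq O$ and every $w\in M$ there exists $\alpha\in\ere$ with $x\bot_{I}\parent{\alpha x+w}$. Indeed, $x\bot_{I}\parent{\alpha x+w}$ is equivalent to $\norm{(1+\alpha)x+w}=\norm{(\alpha-1)x+w}$, so, setting $F(s)=\norm{sx+w}$, I need a zero of $\alpha\mapsto F(\alpha+1)-F(\alpha-1)$. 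Since $F$ is convex and $F(s)\geq\va{s}\,\norm{x}-\norm{w}\to\infty$ as $\va{s}\to\infty$, $F$ is strictly increasing for all large enough $s$ and strictly decreasing for all sufficiently negative $s$; hence $F(\alpha+1)-F(\alpha-1)$ is positive for large $\alpha$ and negative for very negative $\alpha$, and the intermediate value theorem provides the desired $\alpha$.

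The second step treats points $x$ at which the unit circle $\C$ is smooth. At such a point the norm is differentiable at $x$, its derivative $\varphi$ satisfies $\varphi(x)=\norm{x}\neq0$, and $x\bot_{B}w$ holds precisely when $\varphi(w)=0$; thus $\llaves{w\in M:x\bot_{B}w}=\ker\varphi=:\ell$ is a line through $O$ not containing $x$. Let now $x$ be smooth with $x\bot_{I}y$. The hypothesis gives $x\bot_{B}y$, so $y\in\ell$. Moreover every $w\in\ell$ satisfies $x\bot_{I}w$: by the existence fact there is $\alpha$ with $x\bot_{I}\parent{\alpha x+w}$, hence $x\bot_{B}\parent{\alpha x+w}$, hence $\alpha x+w\in\ell$, hence $\alpha x\in\ell$ (since $\ell$ is a subspace and $w\in\ell$), hence $\alpha\varphi(x)=0$, so $\alpha=0$, i.e.\ $x\bot_{I}w$. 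Applying this with $w=2y\in\ell$ gives $x\bot_{I}2y$, so $t=2$ works whenever $x$ is a smooth point.

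It remains to deal with points $x$ at which $\C$ is not smooth. In a Minkowski plane these occur in only countably many directions, and one handles them either by an approximation argument --- the relation $\bot_{I}$ is closed, and the extreme supporting lines at a corner of $\C$ arise as limits of the lines $\ell_{x_{n}}$ for smooth $x_{n}\to x$ --- or by first showing that the standing hypothesis already forces $\C$ to be smooth (equivalently, forces $\bot_{I}$ and $\bot_{B}$ to coincide). Once the hypothesis of Lemma~\ref{lepre1} is checked for all $x,y$, that lemma yields that $(M,\norm{\circ})$ is Euclidean.

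I expect this last point --- removing the smoothness assumption, that is, controlling isosceles orthogonality at corners of the unit circle --- to be the main obstacle; the rest uses only convexity, the intermediate value theorem, and the description of Birkhoff orthogonality at smooth points.
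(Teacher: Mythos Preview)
The paper does not prove this lemma at all: it is stated as a known characterization and attributed to Amir \cite{A}, item (10.9). So there is no ``paper's own proof'' to compare your attempt with; the lemma is used as a black box in the proof of Theorem~\ref{teor2}.

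On your attempt itself: the reduction to Lemma~\ref{lepre1} is a legitimate strategy, and your treatment of the smooth case is correct and rather elegant. The existence step (for every $x\neq O$ and every $w$ there is $\alpha$ with $x\bot_{I}(\alpha x+w)$) is proved cleanly via convexity of $s\mapsto\norm{sx+w}$, and at a smooth point $x$ the chain
\[
x\bot_{I}(\alpha x+w)\ \Longrightarrow\ x\bot_{B}(\alpha x+w)\ \Longrightarrow\ \alpha x+w\in\ker\varphi\ \Longrightarrow\ \alpha=0
\]
is valid, yielding $\llaves{w:x\bot_{I}w}=\ker\varphi$ and hence $x\bot_{I}2y$.

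The genuine gap is exactly where you locate it: the non-smooth directions. Neither of your two suggested fixes is carried out, and neither is routine. For the approximation route you would need, for smooth $x_{n}\to x$ and a given $y$ with $x\bot_{I}y$, to produce $y_{n}\to y$ with $x_{n}\bot_{I}y_{n}$; the existence step gives $x_{n}\bot_{I}(\alpha_{n}x_{n}+y)$ for some $\alpha_{n}$, but you must show $\alpha_{n}\to 0$, which in turn requires some uniqueness or continuity property of isosceles orthogonality that you have not established. For the alternative route (showing the hypothesis forces smoothness), note that at a corner $x$ the set $\llaves{w:x\bot_{B}w}$ is a double sector rather than a line, and your existence step only tells you that $\llaves{w:x\bot_{I}w}$ meets every line parallel to $x$; this is compatible with being contained in the sector, so no contradiction is immediate. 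In short, the sketch is sound up to smooth points but is not yet a proof; to make it one you would have to supply a nontrivial additional argument at corners of $\C$, or else import the result from \cite{A} as the paper does.
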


\section{Main Results}
In this section we present the main results, one of them will allow us to introduce the new
way to define the notion of $\C$-orthocenter and therefore $\C$-orthocentric systems. The
other will be used as support to show the results about characterizations of ecuclidean
planes among normed planes, by studying geometric properties of $\C$-orthocentric systems.


\begin{teor} \label{teor1}
 Let $M$ be a Minkowski plane. Let $x_{1}$, $x_{2}$, $x_{3}$ and $p_{4}$ be points in $M$.
 Let \mbox{$m_{1}$, $m_{2}$ and $m_{3}$} be the midpoints of the segments
 $\seg{x_{2},x_{3}}$, $\seg{x_{1},x_{3}}$ and $\seg{x_{1},x_{2}}$ \mbox{respectively}.
 Define the points $p_{i}=S_{m_{i}}(p_{4})$, for $i=1,2,3$. Then the following holds:
 \begin{enumerate}
  \item The segments $\seg{x_{i},p_{i}}$ has the same midpoint $q$, for $i=1,2,3$. Also
  \mbox{$2(q-m_{i})=x_{i}-p_{4}$} for $i=1,2,3$, i.e,
  \begin{equation*}
   q=\frac{x_{1}+x_{2}+x_{3}-p_{4}}{2}.
  \end{equation*}
  \item If $x_{4}=S_{q}(p_{4})$, then $x_{i}-x_{j}=p_{j}-p_{i}$ for
  $\llaves{i,j}\subset\llaves{1,2,3,4}$.
  \item $x_{i}-p_{j}=p_{k}-x_{l}$, where $\llaves{i,j,k,l}=\llaves{1,2,3,4}$.
  \item If $g=\frac{x_{1}+x_{2}+x_{3}}{3}$, then $H_{g,-2}(p_{4})=x_{4}$.
 \end{enumerate}
\end{teor}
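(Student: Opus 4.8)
The plan is to treat everything as vector identities in $M$, using only the definitions $S_p(w)=2p-w$ and $H_{p,-2}(w)=3p-2w$ and the fact that $m_i$ are midpoints, so $2m_1=x_2+x_3$, $2m_2=x_1+x_3$, $2m_3=x_1+x_2$. No convexity or norm properties are needed; this is pure affine algebra, and the only ``obstacle'' is bookkeeping of indices, which I would handle by proving the symmetric-looking statements for a generic index and then invoking symmetry.

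First, for part (1) I would compute $p_i=S_{m_i}(p_4)=2m_i-p_4$ explicitly, e.g.\ $p_1=x_2+x_3-p_4$, and then show the midpoint of $\seg{x_i,p_i}$ is independent of $i$: the midpoint of $\seg{x_1,p_1}$ is $\tfrac{1}{2}(x_1+p_1)=\tfrac{1}{2}(x_1+x_2+x_3-p_4)$, and the same expression results for $i=2,3$ by the symmetry of the formula. Call this common point $q$; this simultaneously gives the displayed formula $q=\tfrac12(x_1+x_2+x_3-p_4)$ and the identity $2(q-m_i)=x_i-p_4$, obtained by subtracting $2m_i$ (which equals the sum of the two $x$'s other than $x_i$) from $2q=x_1+x_2+x_3-p_4$.

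Next, for part (2) I set $x_4=S_q(p_4)=2q-p_4=x_1+x_2+x_3-2p_4$. For indices $i,j\in\{1,2,3\}$ the relation $x_i-x_j=p_j-p_i$ is immediate since $p_i+x_i=2q$ for all $i\le 3$ by part (1), so $x_i-x_j=(2q-p_i)-(2q-p_j)=p_j-p_i$; I extend this to $j=4$ by noting $p_4+x_4=2q$ as well, so the relation $p_i+x_i=2q$ in fact holds for \emph{all} $i\in\{1,2,3,4\}$, and the claimed identity follows for every pair. Part (3) then falls out of the same observation: writing $\{i,j,k,l\}=\{1,2,3,4\}$, from $x_i=2q-p_i$ and $x_l=2q-p_l$ we get $x_i-p_j=(2q-p_i)-p_j$ and $p_k-x_l=p_k-(2q-p_l)$, and these agree precisely because $p_i+p_j+p_k+p_l=4q-\!\sum x_i$; more directly, I would just verify $x_i+p_j=x_l+p_k$ using $p_i+x_i=2q$ for all four indices together with the fact that $p_1+p_2+p_3+p_4$ and $x_1+x_2+x_3+x_4$ each equal $4q-(x_1+\cdots)$ — in practice it is cleanest to substitute the explicit coordinates $p_1=x_2+x_3-p_4$, etc., and the identity $x_i+p_j=x_l+p_k$ becomes a visibly symmetric statement.

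Finally, for part (4), with $g=\tfrac13(x_1+x_2+x_3)$ I compute $H_{g,-2}(p_4)=3g-2p_4=(x_1+x_2+x_3)-2p_4$, and compare with $x_4=2q-p_4=(x_1+x_2+x_3-p_4)-p_4=(x_1+x_2+x_3)-2p_4$; the two coincide, which proves $H_{g,-2}(p_4)=x_4$. I expect the whole proof to be short once the single identity $p_i+x_i=2q$ (valid for $i=1,2,3,4$) is isolated, since parts (2), (3) and (4) are all easy consequences of it together with the explicit formula for $q$.
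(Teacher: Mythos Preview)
Your approach is essentially the same as the paper's: both arguments are pure affine computations driven by the formula $p_i=x_j+x_k-p_4$ and the resulting identity $p_i+x_i=2q$ for all $i\in\{1,2,3,4\}$, with part (4) handled identically by computing $3g-2p_4$. The only wrinkle is in your treatment of (3): the identity you need is $p_1+p_2+p_3+p_4=4q$ (equivalently $x_i+x_l=p_j+p_k$), not ``$p_i+p_j+p_k+p_l=4q-\sum x_i$'' or ``$x_i+p_j=x_l+p_k$'' as you wrote; once that slip is corrected, your argument goes through and matches the paper's derivation (which reaches the same conclusion via the auxiliary relation $x_i=p_j+p_k-x_4$).
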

\begin{proof}
 (1) Since $m_{i}=\frac{x_{j}+x_{k}}{2}$ for $\{i,j,k\}=\{1,2,3\}$ and
 \begin{equation}
 p_{i}=S_{m_{i}}(p_{4})=x_{j}+x_{k}-p_{4} \label{eq1}
 \end{equation}
 then
 \begin{eqnarray}
  p_{i}+x_{i}=x_{i}+x_{j}+x_{k}-p_{4} \label{eq2}
 \end{eqnarray}
 for  $\{i,j,k\}=\{1,2,3\}$. Thus, 
 \begin{equation*}
  q=\frac{x_{1}+x_{2}+x_{3}-p_{4}}{2}.
 \end{equation*}
 satisfied the propertie (see \textit{Figure \ref{FG7}}).

 (2) If $x_{4}=S_{q}(p_{4})$, since $x_{i}=S_{q}(p_{i})$ for $i=1,2,3$, then 
 \begin{equation} \label{eq3}
  x_{i}-x_{j}=S_{q}(p_{i})-S_{q}(p_{j})=p_{j}-p_{i}
 \end{equation}
 for $\{i,j\}\subset\{1,2,3,4\}$
 
 \begin{figure}[h!]
  \begin{center}
   \includegraphics[scale=0.14]{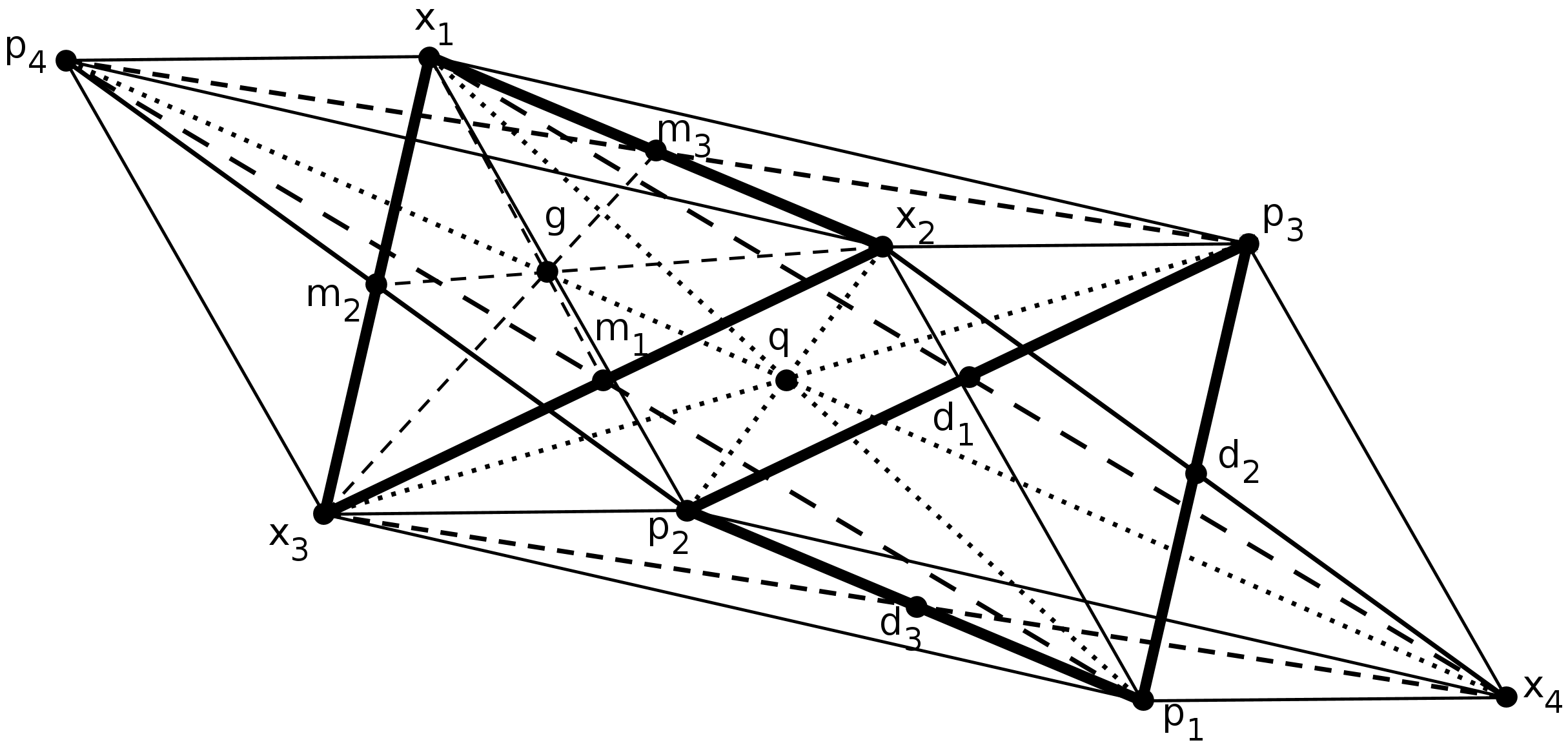}
   \caption{\label{FG7} \textit{Demonstration Theorem \ref{teor1}}}
  \end{center}
 \end{figure}
 
 (3) Let $d_{i}=S_{q}(m_{i})$ for $i=1,2,3$, then $d_{i}$ are the midpoints of the
 segments $\seg{p_{j},p_{k}}$ for $\{i,j,k\}=\{1,2,3\}$. Then, using the equation (\ref{eq1})
 and the simmetry $S_{q}$ we have
 \begin{equation}\label{eq4}
 x_{i}=p_{j}+p_{k}-x_{4}
 \end{equation}
 for $\llaves{i,j,k}=\llaves{1,2,3}$. From equations (\ref{eq3}) and (\ref{eq4}), then we get
 the propiertie desired.

 (4) Since $g=\frac{x_{1}+x_{2}+x_{3}}{3}$ and by the homothety definition, then
 \begin{equation*}
   H_{g,-2}(p_{4})=3g-2p_{4}=3\left(\frac{x_{1}+x_{2}+x_{3}}{3}\right)-2p_{4}=x_{4}
 \end{equation*}
\end{proof}

By the construction shown in Theorem (\ref{teor1}), we will call \textit{$p_{4}$-Antitriangle}
of the \mbox{triangle} $\triangle x_{1}x_{2}x_{3}$ to the triangle $\triangle p_{1}p_{2}p_{3}$, i.e,
the \textit{Antitriangle} with respect to the point $p_{4}$. Now, we can introduce the new
way to define the notion of $\C$-orthocenter saying: given the points $x_{1}$, $x_{2}$,
$x_{3}$, and $p_{4}$ in $\C\parent{\triangle x_{1}x_{2}x_{3}}$, we will
say that $x_{4}$ is the $\C$-orthocenter of the triangle $\triangle x_{1}x_{2}x_{3}$
associated with $p_{4}$ if $S_{q}(p_{4})=x_{4}$, where $q$ is the point of symmetry of the
triangle $\triangle x_{1}x_{2}x_{3}$ and its \textit{$p_{4}$-Antitriangle} as we defined in
the Theorem (\ref{teor1}). Furthermore, if we have four points $p_{1}$, $p_{2}$, $p_{3}$ and
$p_{4}$ in the plane. We will say that $\{p_{1},p_{2},p_{3},p_{4}\}$ is a $\C$-orthocentric
system if there exist a circumcenter $x_{i}$ of the triangle $\triangle p_{j}p_{k}p_{l}$,
with $\{i,j,k,l\}=\{1,2,3,4\}$ such that $p_{i}$ is the $\C$-orthocenter of the triangle
$\triangle p_{j}p_{k}p_{l}$. Then, let
\mbox{$\Ha(\triangle abc)=\llaves{x\in M: x\text{ \textit{is a $\C$-orthocenter of the
triangle}}}$} be the set of $\C$-orthocenters of the triangle $\triangle abc$.

As we said at the beginning, $x_{4}$ is the circumcenter of the $p_{4}$-Antitriangle of
the triangle $\triangle x_{1}x_{2}x_{3}$ and it holds the definition given by E. Asplund and
B. Gr\"{u}nbaum in \cite{A-G}. In this way one can see the relation between the
$\C$-orthocenters and the circumcenters of the triangle $\triangle x_{1}x_{2}x_{3}$ and we
show the uniqueness of the $\C$-orthocenter, given the circumcenter of the triangle.


\begin{cor}\label{corl1}
With the assumptions of the previous theorem. Then the following holds:
 \begin{enumerate}
  \item If $x_{4}=S_{q}(p_{4})$, then $x_{4}=x_{1}+x_{2}+x_{3}-2p_{4}$
  \item $m_{i}=\frac{p_{i}+p_{4}}{2}$ for all $i=1,2,3$.
  \item If $g=\frac{x_{1}+x_{2}+x_{3}}{3}$ and $g_{1}=\frac{p_{1}+p_{2}+p_{3}}{3}$, then
  $g_{1}=S_{q}(g)$ and $S_{g}(p_{4})=g_{1}$.
 \end{enumerate}
 \end{cor}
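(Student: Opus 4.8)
The plan is to derive all three items by direct substitution into the identities already established in Theorem \ref{teor1}, using only the definitions $S_{p}(w)=2p-w$ and $H_{p,-2}(w)=3p-2w$; no further geometry is needed beyond careful bookkeeping of the index sets $\llaves{i,j,k}=\llaves{1,2,3}$. For item (1), I would start from the formula for $q$ obtained in part (1) of Theorem \ref{teor1}, namely $q=\frac{x_{1}+x_{2}+x_{3}-p_{4}}{2}$, and expand $x_{4}=S_{q}(p_{4})=2q-p_{4}$; this immediately gives $x_{4}=x_{1}+x_{2}+x_{3}-2p_{4}$. For item (2), I would recall from equation (\ref{eq1}) that $p_{i}=S_{m_{i}}(p_{4})=2m_{i}-p_{4}$, and solving for $m_{i}$ yields $m_{i}=\frac{p_{i}+p_{4}}{2}$ for each $i=1,2,3$.

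For item (3), the first step is to sum the three identities $p_{i}=x_{j}+x_{k}-p_{4}$ from (\ref{eq1}) over $i=1,2,3$; each of $x_{1},x_{2},x_{3}$ then appears exactly twice, giving $p_{1}+p_{2}+p_{3}=2(x_{1}+x_{2}+x_{3})-3p_{4}$, and dividing by $3$ produces $g_{1}=2g-p_{4}$. Next I would verify $S_{q}(g)=2q-g=(x_{1}+x_{2}+x_{3}-p_{4})-g=3g-p_{4}-g=2g-p_{4}=g_{1}$, and finally $S_{g}(p_{4})=2g-p_{4}=g_{1}$, which is literally the same expression just obtained. So both assertions of item (3) collapse to the single identity $g_{1}=2g-p_{4}$.

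There is essentially no obstacle here: the corollary is a list of immediate algebraic consequences of Theorem \ref{teor1}. The only place requiring a moment of attention is tracking which indices occur in each sum — in particular, that summing $p_{i}=x_{j}+x_{k}-p_{4}$ yields the coefficient $2$ on $x_{1}+x_{2}+x_{3}$ and $3$ on $p_{4}$ — and observing that the two statements in item (3) reduce to one and the same relation, so that verifying $g_{1}=2g-p_{4}$ suffices for both.
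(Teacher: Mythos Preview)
Your proposal is correct and follows essentially the same direct algebraic route as the paper: expanding $S_{q}(p_{4})=2q-p_{4}$ for item~(1), solving $p_{i}=2m_{i}-p_{4}$ for $m_{i}$ in item~(2), and summing the relations $p_{i}=x_{j}+x_{k}-p_{4}$ for item~(3). Your handling of item~(2) is in fact slightly cleaner than the paper's (which detours through part~(3) of Theorem~\ref{teor1}), and your observation that both assertions in item~(3) reduce to the single identity $g_{1}=2g-p_{4}$ is a nice consolidation.
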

 
\begin{proof}
 (1) By Theorem (\ref{teor1}) $q=\frac{x_{1}+x_{2}+x_{3}-p_{4}}{2}$ and, since
 $q=\frac{x_{4}+p_{4}}{2}$, then \mbox{$x_{4}=x_{1}+x_{2}+x_{3}-2p_{4}$}.
 
 (2) Since $m_{i}=\frac{x_{j}+x_{k}}{2}$ for $\{i,j,k\}=\{1,2,3\}$, then by $(3)$ from
 Theorem (\ref{le1}) (ver \textit{Figura \ref{FG9}}), we get 
 \begin{equation*}
  m_{i}=\frac{x_{j}+x_{k}}{2}=\frac{p_{i}+p_{l}}{2}
 \end{equation*}
 for $\{i,j,k,l\}=\{1,2,3,4\}$.

\begin{figure}[h!]
 \begin{center}
  \includegraphics[scale=0.12]{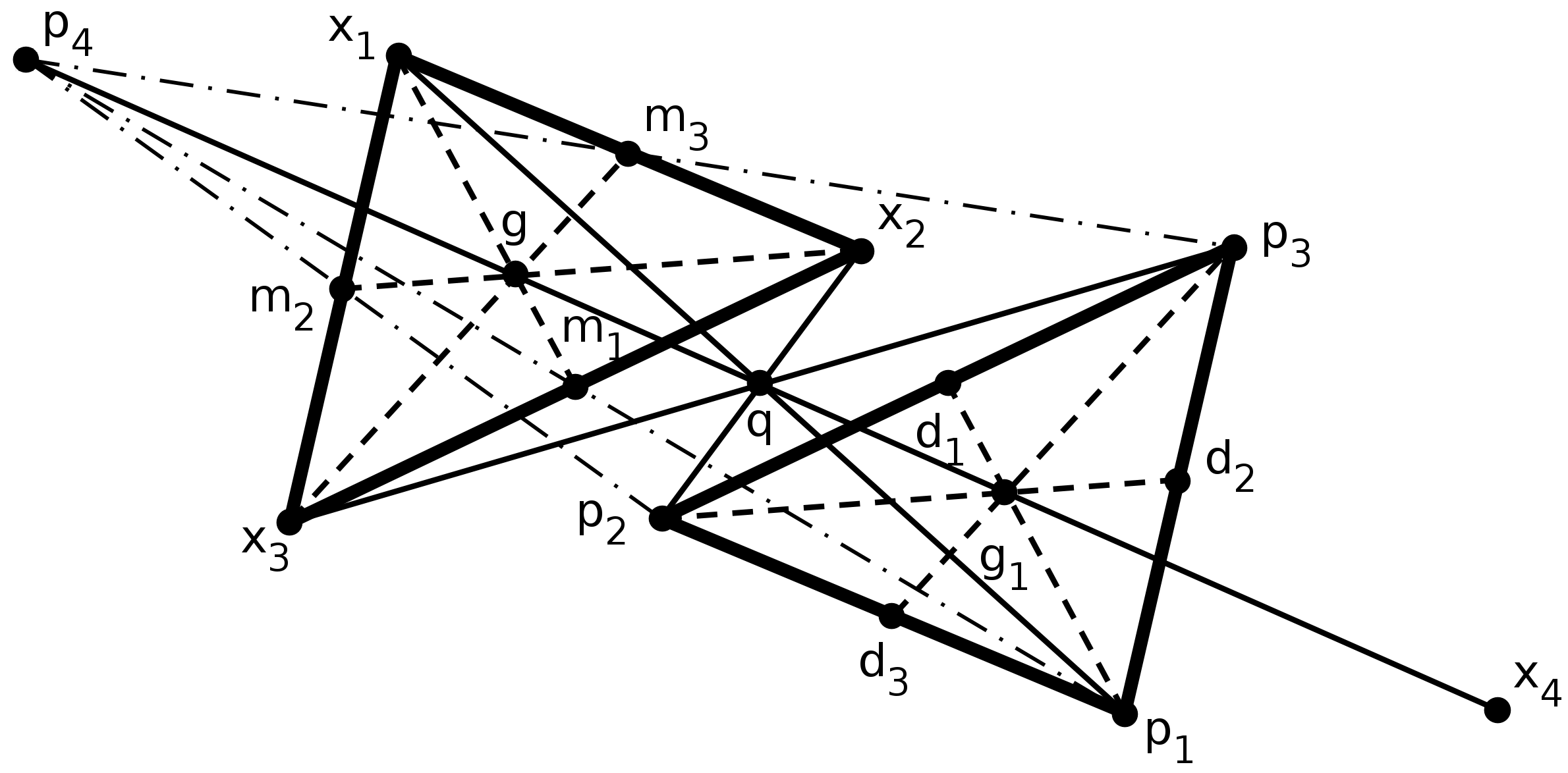}
  \caption{\label{FG9} \textit{Demonstration Corollary \ref{corl1}}}
 \end{center}
\end{figure}
 
(3) Since $p_{i}=S_{m_{i}}(p_{4})$ for $i=1,2,3$ (see \textit{Figure \ref{FG9}}), then
 \begin{equation*}
  g_{1}=\frac{p_{1}+p_{2}+p_{3}}{3}=\frac{2x_{1}+2x_{2}+2x_{3}-3p_{4}}{3}=
  2g-p_{4}=S_{g}(p_{4})
 \end{equation*}
 
 On the other hand, since $g_{1}=\frac{p_{1}+p_{2}+p_{3}}{3}$,
 \begin{equation*}
  g_{1}=\frac{2(m_{1}+m_{2}+m_{3})-3p_{4}}{3}=
  x_{1}+x_{2}+x_{3}-p_{4}-\frac{x_{1}+x_{2}+x_{3}}{3}=S_{q}(g)
 \end{equation*}
 \end{proof}


\begin{cor} \label{corl2}
 With the assumptions of the previous theorem. Let $p_{4}$ in
 $\C\parent{\triangle x_{1}x_{2}x_{3}}$, and \mbox{$d_{1}$, $d_{2}$ and $d_{3}$} be the
 midpoints sides of the $p_{4}$-Antitriangle of the triangle $\triangle x_{1}x_{2}x_{3}$
 \mbox{respectively}, then the following holds:
 \begin{enumerate}
  \item $\llaves{x_{1},x_{2},x_{3},x_{4}}$ and $\llaves{p_{1},p_{2},p_{3},p_{4}}$ are
  $\C$-orthocentrics systems.
  \item The points $d_{1}$, $d_{2}$ and $d_{3}$ are the midpoints of the segments that join
  the vertices of the \mbox{triangle} $\triangle x_{1}x_{2}x_{3}$ with its $\C$-orthocenter.
  \item The midpoints sides of the triangle $\triangle x_{1}x_{2}x_{3}$ and its
  $p_{4}$-Antitriangle, lies in the circle with center $q$ and radio $\norm{q-m_{1}}$
  (The Six Points Circle).
 \end{enumerate}
\end{cor}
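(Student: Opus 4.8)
The plan is to obtain all three items directly from Theorem \ref{teor1} and Corollary \ref{corl1}, the only new ingredient being the hypothesis $p_{4}\in\C\parent{\triangle x_{1}x_{2}x_{3}}$, that is, $\norm{x_{1}-p_{4}}=\norm{x_{2}-p_{4}}=\norm{x_{3}-p_{4}}$. For (1) I would first check that $x_{4}$ is a circumcenter of $\triangle p_{1}p_{2}p_{3}$: subtracting the identity $p_{i}=x_{j}+x_{k}-p_{4}$ of (\ref{eq1}) from $x_{4}=x_{1}+x_{2}+x_{3}-2p_{4}$ (Corollary \ref{corl1}(1)) gives $x_{4}-p_{i}=x_{i}-p_{4}$ for $i=1,2,3$, so $\norm{x_{4}-p_{i}}=\norm{x_{i}-p_{4}}$ does not depend on $i$ and hence $x_{4}\in\C\parent{\triangle p_{1}p_{2}p_{3}}$. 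Then, using (\ref{eq4}), one verifies that the $x_{4}$-Antitriangle of $\triangle p_{1}p_{2}p_{3}$ is exactly $\triangle x_{1}x_{2}x_{3}$, with the same symmetry center $q$ (the common midpoint of the segments $\seg{x_{i},p_{i}}$); since $S_{q}(x_{4})=p_{4}$, this is precisely the statement that $p_{4}$ is the $\C$-orthocenter of $\triangle p_{1}p_{2}p_{3}$ associated with the circumcenter $x_{4}$, so $\llaves{p_{1},p_{2},p_{3},p_{4}}$ is a $\C$-orthocentric system. For $\llaves{x_{1},x_{2},x_{3},x_{4}}$ the argument is shorter: $p_{4}$ is a circumcenter of $\triangle x_{1}x_{2}x_{3}$ by hypothesis, and $x_{4}=S_{q}(p_{4})$ is, by the definition introduced after Theorem \ref{teor1}, the $\C$-orthocenter of $\triangle x_{1}x_{2}x_{3}$ associated with it.

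For (2) I would start from $d_{i}=S_{q}(m_{i})$ (proved inside Theorem \ref{teor1}(3)), equivalently $d_{i}=\frac{p_{j}+p_{k}}{2}$ for $\llaves{i,j,k}=\llaves{1,2,3}$. By (\ref{eq4}) one has $p_{j}+p_{k}=x_{i}+x_{4}$, whence $d_{i}=\frac{x_{i}+x_{4}}{2}$; since $x_{4}$ is the $\C$-orthocenter of $\triangle x_{1}x_{2}x_{3}$ by part (1), this says exactly that $d_{i}$ is the midpoint of the segment joining the vertex $x_{i}$ with the $\C$-orthocenter.

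For (3), Theorem \ref{teor1}(1) gives $2(q-m_{i})=x_{i}-p_{4}$, hence $\norm{q-m_{i}}=\frac{1}{2}\norm{x_{i}-p_{4}}$, which is independent of $i$ because $p_{4}$ is a circumcenter of $\triangle x_{1}x_{2}x_{3}$; thus $m_{1},m_{2},m_{3}\in\C\parent{q,\norm{q-m_{1}}}$. For the other three points, $d_{i}=S_{q}(m_{i})$ and $S_{q}$ is an isometry fixing $q$, so $\norm{q-d_{i}}=\norm{S_{q}(q)-S_{q}(m_{i})}=\norm{q-m_{i}}=\norm{q-m_{1}}$, and all six midpoints lie on $\C\parent{q,\norm{q-m_{1}}}$. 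I do not expect a genuine obstacle: every equality here is forced by Theorem \ref{teor1} and Corollary \ref{corl1}, and the only place needing slight care is the index bookkeeping in (1), namely checking that the Antitriangle and symmetry point occurring there are the ones demanded by the definition of $\C$-orthocenter.
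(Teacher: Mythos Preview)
Your proposal is correct and follows essentially the same route as the paper: all three items are extracted from the identities of Theorem \ref{teor1} and Corollary \ref{corl1} together with the circumcenter hypothesis, arriving at $d_{i}=\frac{x_{i}+x_{4}}{2}$ for (2) and $\norm{q-m_{i}}=\frac{1}{2}\norm{x_{i}-p_{4}}$, $d_{i}=S_{q}(m_{i})$ for (3). In part (1) you are in fact a bit more explicit than the paper, which simply asserts that each $p_{i}$ is a circumcenter of $\triangle x_{j}x_{k}x_{l}$ and invokes $x_{i}=S_{q}(p_{i})$; your verification that $x_{4}\in\C(\triangle p_{1}p_{2}p_{3})$ via $x_{4}-p_{i}=x_{i}-p_{4}$ and that the $x_{4}$-Antitriangle of $\triangle p_{1}p_{2}p_{3}$ is $\triangle x_{1}x_{2}x_{3}$ with the same symmetry point $q$ makes the appeal to the definition of $\C$-orthocenter fully transparent.
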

\begin{proof}
 (1) Since $p_{4}$ is the circumcenter of the triangle $\triangle x_{1}x_{2}x_{3}$ and by
 the construction used in the Theorem (\ref{teor1}) we get that $x_{4}=S_{q}(p_{4})$, so
 $x_{4}$ is the $\C$-orthocenter of the triangle $\triangle x_{1}x_{2}x_{3}$. Furthermore,
 since $p_{i}$ is the circumcenter of the triangle $\triangle x_{j}x_{k}x_{j}$ for
 $\{i,j,k,l\}=\{1,2,3,4\}$, and $x_{i}=S_{q}(p_{i})$ for $i=1,2,3$, then the sets
 $\llaves{x_{1},x_{2},x_{3},x_{4}}$ and $\llaves{p_{1},p_{2},p_{3},p_{4}}$ are
 $\C$-orthocentric systems (see \textit{Figure \ref{FG11}}).
 
\begin{figure}[h!]
 \begin{center}
  \includegraphics[scale=0.32]{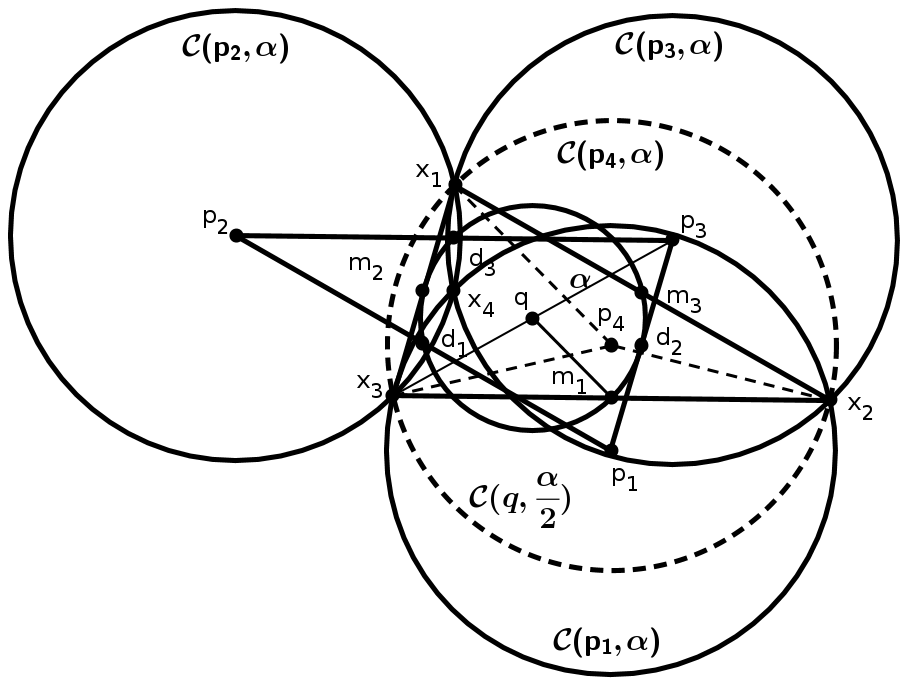}
  \caption{\label{FG11} \textit{Six Points Circle}}
 \end{center}
\end{figure}

(2) Since $d_{i}=\frac{p_{j}+p_{k}}{2}$ for $\{i,j,k\}=\{1,2,3\}$ and $p_{i}=S_{m_{i}}(p_{4})$,
then
 \begin{equation*}
  d_{i}=\frac{p_{j}+p_{k}}{2}=\frac{2m_{j}-p_{4}+2m_{k}-p_{4}}{2}=
  \frac{2x_{i}+x_{k}+x_{j}-2p_{4}}{2}=\frac{x_{i}+x_{4}}{2}
 \end{equation*}
for $\{i,j,k\}=\{1,2,3\}$.

(3) Since $p_{4}$ is the circumcenter of the triangle $\triangle x_{1}x_{2}x_{3}$, then
 \begin{equation*}
  \norm{p_{4}-x_{i}}=\alpha
 \end{equation*}
 for $i=1,2,3$. Now, to prove that $\C(q,\norm{q-m_{1}})$ passes through the midpoints side
 of the triangles $\triangle x_{1}x_{2}x_{3}$ and $\triangle p_{1}p_{2}p_{3}$, is enough to
 see that
 \begin{equation*}
  \norm{q-m_{i}}=\alpha\hspace{1cm}\text{y}\hspace{1cm}\norm{q-d_{i}}=\alpha
 \end{equation*}
 for $i=1,2,3$. Then, notice that
 \begin{equation} \label{eq12}
  \norm{q-m_{i}}=\frac{\norm{p_{4}-x_{i}}}{2}
 \end{equation}
for $i=1,2,3$.
 
On the other hand, $d_{i}=2q-m_{i}$ for $i=1,2,3$ and therefore
\begin{equation} \label{eq15}
 \norm{q-d_{i}}=\norm{q-m_{i}}
\end{equation}
for $i=1,2,3$. Then, by the equations (\ref{eq12}) and (\ref{eq15}) the points
$m_{1}$, $m_{2}$, $m_{3}$, $d_{1}$, $d_{2}$ and $d_{3}$ lie in the circle
$\C(q,\frac{\alpha}{2})$ (see \textit{Figure \ref{FG11}}).
 \end{proof}

In Euclidean Geometry is well known the alinement relation between the barycenter, the
circumcenter and the orthocenter of a triangle. The line passing through these points
is called \textit{Euler's Line} of the triangle. If ``$o$'' is the circumcenter, ``$g$'' the
barycenter and ``$h$'' the orthocenter of a triangle, it is well known that ``$g$'' is an
interior point of the segment $\seg{o,h}$, and if the three points are different, then the
ratio $\frac{hg}{og}=-2$. Also these properties holds in strictly convex Minkowski plane.
The following corollary shows the generalization of the notion of Euler's line in Minkowski
planes using the structure of \mbox{$\C$-orthocenter}, and show that ``$h$'' is
the image of ``$o$'' in the homothety with center ``$g$'' and ratio $-2$. So, the Euler's
Line Theorem in Minkowski planes would be expressed as follows:

 
\begin{cor}
 Let $M$ a Minkowski plane and $x_{1},x_{2},x_{3}$ points of $M$. If
 $g=\frac{x_{1}+x_{2}+x_{3}}{3}$, the barycenter of the triangle $\triangle x_{1}x_{2}x_{3}$,
 then $H_{g,-2}(\C(\triangle x_{1}x_{2}x_{3}))=\Ha(\triangle x_{1}x_{2}x_{3})$.
\end{cor}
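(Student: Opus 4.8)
The plan is to show the two set inclusions that constitute the claimed equality $H_{g,-2}(\C(\triangle x_1x_2x_3)) = \Ha(\triangle x_1x_2x_3)$, using the correspondence between circumcenters and $\C$-orthocenters established in Theorem~\ref{teor1} and its corollaries. The key observation is that by the definition of $\C$-orthocenter adopted just after Theorem~\ref{teor1}, a point $h$ is in $\Ha(\triangle x_1x_2x_3)$ precisely when there is a circumcenter $p_4\in\C(\triangle x_1x_2x_3)$ with $h = S_q(p_4) = x_4$, where $q = \tfrac{1}{2}(x_1+x_2+x_3-p_4)$; and part~(4) of Theorem~\ref{teor1} says exactly that this $x_4$ equals $H_{g,-2}(p_4)$.

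First I would take an arbitrary $h\in\Ha(\triangle x_1x_2x_3)$. By definition of the $\C$-orthocenter set, there exists $p_4\in\C(\triangle x_1x_2x_3)$ such that $h$ is the $\C$-orthocenter associated with $p_4$, i.e.\ $h = S_q(p_4) = x_4$ with $q$ the symmetry point of the triangle and its $p_4$-antitriangle. By part~(4) of Theorem~\ref{teor1}, $x_4 = H_{g,-2}(p_4)$, so $h = H_{g,-2}(p_4) \in H_{g,-2}(\C(\triangle x_1x_2x_3))$. This gives $\Ha(\triangle x_1x_2x_3)\subseteq H_{g,-2}(\C(\triangle x_1x_2x_3))$.

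For the reverse inclusion, I would start with an arbitrary point in $H_{g,-2}(\C(\triangle x_1x_2x_3))$, which has the form $H_{g,-2}(p_4) = 3g - 2p_4$ for some circumcenter $p_4\in\C(\triangle x_1x_2x_3)$. Set $q = \tfrac{1}{2}(x_1+x_2+x_3-p_4)$ and run the construction of Theorem~\ref{teor1}: define $m_i$ as the midpoints of the sides, $p_i = S_{m_i}(p_4)$ for $i=1,2,3$, and $x_4 = S_q(p_4)$. By Corollary~\ref{corl2}(1), since $p_4$ is a circumcenter of $\triangle x_1x_2x_3$, the set $\{x_1,x_2,x_3,x_4\}$ is a $\C$-orthocentric system and $x_4$ is the $\C$-orthocenter of $\triangle x_1x_2x_3$ associated with $p_4$; hence $x_4\in\Ha(\triangle x_1x_2x_3)$. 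Finally, part~(4) of Theorem~\ref{teor1} gives $x_4 = H_{g,-2}(p_4) = 3g - 2p_4$, so the chosen point lies in $\Ha(\triangle x_1x_2x_3)$. Combining the two inclusions yields the equality.

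I do not expect a serious obstacle here: the corollary is essentially a repackaging of Theorem~\ref{teor1}(4) together with the definition of $\C$-orthocenter and Corollary~\ref{corl2}(1), and no convexity or orthogonality hypothesis is invoked. The one point that deserves a careful sentence is the logical structure of the definition of $\Ha$: membership $h\in\Ha(\triangle x_1x_2x_3)$ is quantified existentially over circumcenters $p_4$, and the homothety $H_{g,-2}$ is a bijection of $M$, so it maps the circumcenter set bijectively onto the $\C$-orthocenter set with no loss; making that correspondence explicit is the whole content of the proof.
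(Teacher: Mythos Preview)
Your proof is correct and follows essentially the same approach as the paper: both establish the two set inclusions by invoking part~(4) of Theorem~\ref{teor1} and Corollary~\ref{corl2}(1), together with the definition of $\C$-orthocenter. The only cosmetic difference is that you prove the inclusions in the opposite order from the paper.
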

\begin{proof}
 Take $p_{4}\in\C(\triangle x_{1}x_{2}x_{3})$, then by (1) from Corollary (\ref{corl2}) there
 exists a point \mbox{$x_{4}\in\Ha(\triangle x_{1}x_{2}x_{3})$} such that
 $\{x_{1},x_{2},x_{3},x_{4}\}$ is a $\C$-orthocentric system. Then, by (4) from Theorem
 (\ref{teor1}) $H_{g,-2}(p_{4})=x_{4}$ and therefore
 \mbox{$H_{g,-2}(\C(\triangle x_{1}x_{2}x_{3}))\subset\Ha(\triangle x_{1}x_{2}x_{3})$}.
 
 Conversely let $x_{4}\in\Ha(\triangle x_{1}x_{2}x_{3})$, then there exists
 $p_{4}\in\C(\triangle x_{1}x_{2}x_{3})$ such that $S_{q}(p_{4})=x_{4}$, with $q$ the point
 of symmetry of the triangles $\triangle x_{1}x_{2}x_{3}$ and its
 \mbox{$p_{4}$-Antitriangle}. Then, $x_{4}$ is the circumcenter of the $p_{4}$-Antitriangle.
 Thus, $p_{4}$ holds that $H_{g,-2}(p_{4})=x_{4}$ and therefore
 $\Ha(\triangle x_{1}x_{2}x_{3})\subset H_{g,-2}(\C(\triangle x_{1}x_{2}x_{3}))$.
\end{proof}


The following Lemma will give us the tools to proof the results about \mbox{characterizations}
of euclidean planes by studying geometric properties of $\C$-orthocentric systems in normed
planes.

\begin{lema}\label{le1}
 Let $(M,\norm{\circ})$ be a Minkowski plane, with origen $O$. For any $x,z\in M$ with
 $x\bot_{I}z$, let $p_{3}=-z$, $p_{4}=z$, $x_{1}=x$, $x_{2}=-x$, and $\lambda=\norm{x+z}$.
 Then there are points $x_{3}\in\C(p_{4},\lambda)$ and $q\in\C(O,\lambda/2)$,
 such that $\{p_{1}, p_{2}, p_{3}, p_{4}\}$ and $\llaves{x_{1},x_{2},x_{3},x_{4}}$ are
 $\C$-orthocentric systems, where $p_{1}=S_{q}(x_{1})$, $p_{2}=S_{q}(x_{2})$ and
 $x_{4}=S_{q}(p_{4})$. Furthermore, if $L_{1}=\gen{S_{x_{1}}(p_{3}),S_{x_{2}}(p_{3})}$ and
 $L_{3}$ is the parallel line to $L_{1}$ passing through $p_{4}$, then the following statements
 holds:
 \begin{enumerate}
  \item $p_{1}\in\C(x_{2},\lambda)$, $p_{2}\in\C(x_{1},\lambda)$ and
  $x_{4}\in\C(p_{3},\lambda)$
  \item There are points $p_{1}$ and $p_{2}$ such that
  $\norm{p_{3}-p_{1}}=\norm{p_{3}-p_{2}}$.
  \item $p_{1}=\pm\frac{\lambda z}{\norm{z}}-x$ and $p_{2}=\pm\frac{\lambda z}{\norm{z}}+x$
  if and only if $p_{4}\in\gen{p_{3},\frac{p_{1}+p_{2}}{2}}$. Particularly,
  $p_{1}=\frac{\lambda z}{\norm{z}}-x$ and $p_{2}=\frac{\lambda z}{\norm{z}}+x$ if and only
  if $p_{4}\in[p_{3},\frac{p_{1}+p_{2}}{2}]$.
  \item If $\norm{z}<\lambda$ and
  $q\in Arc^{-}_{\C(O,\frac{\lambda}{2})}\parent{\frac{p_{4}+x_{2}}{2},\frac{p_{4}+x_{1}}{2}}$,
  with $p_{1}\neq p_{4},S_{x_{2}}(p_{3})$, then $p_{3}$ and the line $\gen{p_{1},p_{2}}$ are
  separated by $L_{1}$.
  \item There are points $p_{1}$ and $p_{2}$ such that the line $\gen{p_{1},p_{2}}$ and
  $p_{3}$ are separated by $L_{1}$, or \mbox{$\gen{p_{1},p_{2}}=L_{1}$}. Furthermore
  $p_{4}\in A_{B}\left([p_{3},p_{1}\rangle,[p_{3},p_{2}\rangle\right)$.
 \end{enumerate}
\end{lema}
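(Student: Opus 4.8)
The plan is to realise the whole configuration as an instance of Theorem~\ref{teor1}, so that the two $\C$-orthocentric systems come for free from Corollary~\ref{corl2}(1), and then to dispatch items (1)--(3) by direct substitution and items (4)--(5) by intermediate-value arguments. For the set-up: for any $x_3\in\C(p_4,\lambda)$ lying off the line $\gen{x_1,x_2}$ put $q:=\tfrac12(x_1+x_2+x_3-p_4)=\tfrac12(x_3-z)$; then $\norm q=\tfrac12\norm{x_3-z}=\lambda/2$, so $q\in\C(O,\lambda/2)$. With $p_1=S_q(x_1)$, $p_2=S_q(x_2)$, $x_4=S_q(p_4)$ as in the statement, one has $m_3=\tfrac12(x_1+x_2)=O$, hence $p_3=-z=S_{m_3}(p_4)$, and $x_3=S_q(p_3)$; thus the six points carry exactly the labels of Theorem~\ref{teor1}. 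Since $x\bot_{I}z$ forces $\norm{z-x_1}=\norm{x-z}=\norm{x+z}=\lambda=\norm{z-x_2}$, and $\norm{z-x_3}=\lambda$ by the choice of $x_3$, we get $p_4\in\C(\triangle x_1x_2x_3)$, so Corollary~\ref{corl2}(1) gives that $\{x_1,x_2,x_3,x_4\}$ and $\{p_1,p_2,p_3,p_4\}$ are $\C$-orthocentric systems. This already proves the unlabelled assertion of the Lemma for \emph{every} admissible $x_3$; the five items are then secured by choosing $x_3$ (equivalently $q$, or $p_1$) suitably.

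For (1), feed the index tuples $(i,j,k,l)=(2,1,4,3),(1,2,4,3),(4,3,1,2)$ into the identity $x_i-p_j=p_k-x_l$ of Theorem~\ref{teor1}(3): each gives a length equal to $\norm{p_4-x_3}=\lambda$, so $p_1\in\C(x_2,\lambda)$, $p_2\in\C(x_1,\lambda)$, $x_4\in\C(p_3,\lambda)$. For (2), Theorem~\ref{teor1}(2) gives $\norm{p_3-p_1}=\norm{x_1-x_3}$ and $\norm{p_3-p_2}=\norm{x_2-x_3}$, so I apply the intermediate value theorem to $x_3\mapsto\norm{x_1-x_3}-\norm{x_2-x_3}$ along an arc of $\C(p_4,\lambda)$ joining $x_1$ to $x_2$ (it runs from $-\norm{x_1-x_2}$ to $\norm{x_1-x_2}$) and take $x_3$ at a zero in the interior of the arc. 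For (3), $\tfrac12(p_1+p_2)=x_3-z$ and $p_3=-z$, so $p_4=z\in\gen{p_3,\tfrac12(p_1+p_2)}$ iff $x_3$ is a scalar multiple of $z$; with $\norm{x_3-z}=\lambda$ this forces $x_3=z\pm\tfrac{\lambda z}{\norm z}$, and substituting into $p_1=x_3-z-x$, $p_2=x_3-z+x$ yields the stated formulas. The ``particularly'' clause follows since membership in the segment makes the scalar be $\ge2$, and $\norm z\le\lambda$ (from $2\norm z=\norm{(x+z)-(x-z)}\le2\lambda$) then rules out the minus sign.

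For (4), compute $L_1=\gen{S_{x_1}(p_3),S_{x_2}(p_3)}=\gen{2x+z,\,z-2x}$, which is the line $z+\gen{x}$ through $p_4$ parallel to $\gen{x_1,x_2}$; since $\gen{p_1,p_2}=(x_3-z)+\gen{x}$ and $p_3$ lies on $-z+\gen{x}$ are parallel to it, in the quotient $M/\gen{x}\cong\ere$ the assertion ``$L_1$ separates $p_3$ from $\gen{p_1,p_2}$'' becomes ``$\bar z$ lies strictly between $\overline{-z}$ and $\overline{x_3-z}$'', i.e., ``$\overline{x_4}$ and $\bar z$ have the same sign'' (using $x_4=x_3-2z$). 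The side conditions $p_1\neq p_4,S_{x_2}(p_3)$ merely say that $q$ avoids the arc's endpoints $\tfrac12(z+x)$ and $\tfrac12(z-x)$; and, writing $x_4=2q-z$, since $Arc^{-}_{\C(O,\lambda/2)}(\tfrac{z-x}{2},\tfrac{z+x}{2})$ is (with the paper's convention) the open arc on the side of the chord line $\gen{\tfrac{z-x}{2},\tfrac{z+x}{2}}=\tfrac z2+\gen{x}$ away from $O$, the hypothesis on $q$ gives $\bar q\,\bar z>\tfrac12\bar z^{2}$, whence $\overline{x_4}\,\bar z=(2\bar q-\bar z)\bar z>0$, as required.

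For (5), the hard part, use $p_1$ itself as parameter: by (1) it is forced to lie on $\C(x_2,\lambda)$, and then $p_2=p_1+2x$, $x_3=p_1+x+z$, $q=\tfrac12(x+p_1)$. Put $a=\tfrac{z-x}{\lambda},b=\tfrac{z+x}{\lambda}\in\C$, $s=\tfrac12(a+b)=\tfrac z\lambda$, $d=\tfrac12(b-a)=\tfrac x\lambda$, and $p_1=-x+\lambda w$ with $w=\sigma s+\delta d\in\C$; then $p_1-p_3=\lambda(a+w)$, $p_2-p_3=\lambda(b+w)$, $p_4-p_3=2\lambda s$, and since the $s$-components of $a+w$ and $b+w$ are equal (both $1+\sigma$), the Busemann bisector direction $\tfrac{a+w}{\norm{a+w}}+\tfrac{b+w}{\norm{b+w}}$ is a positive multiple of $s$ (hence of $z$) exactly when $1+\sigma>0$ and $\tfrac{1-\delta}{\norm{a+w}}=\tfrac{1+\delta}{\norm{b+w}}$. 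Taking $w$ on the right boundary branch $\sigma=\sigma_{\max}(\delta)$ of $\C$ — which satisfies $\sigma_{\max}(\delta)\ge1$ on $[-1,1]$ because the segment $[a,b]=[(1,-1),(1,1)]$ lies in the unit disk — the function $\delta\mapsto\tfrac{1-\delta}{\norm{a+w}}-\tfrac{1+\delta}{\norm{b+w}}$ is continuous, positive at $\delta=-1$ and negative at $\delta=1$, so a zero $\delta_0\in(-1,1)$ exists; there $1+\sigma_{\max}(\delta_0)\ge2>0$, so $p_4\in A_{B}([p_3,p_1\rangle,[p_3,p_2\rangle)$, while in $M/\gen{x}$ one has $\overline{p_1}=\sigma_{\max}(\delta_0)\,\bar z$ with $\sigma_{\max}(\delta_0)\ge1$, so $\bar z$ lies between $\overline{-z}$ and $\overline{p_1}$, equal to $\overline{p_1}$ iff $\sigma_{\max}(\delta_0)=1$ — that is, $\gen{p_1,p_2}$ and $p_3$ are separated by $L_1$, or $\gen{p_1,p_2}=L_1$. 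The main obstacle is precisely this last step: one must produce a single $x_3$ making the Busemann bisector point exactly at $p_4$ \emph{and} keeping $\gen{p_1,p_2}$ on the correct side of $L_1$; both are reconciled by selecting the right boundary branch of $\C$ and reading the two conditions off the one scalar $\sigma_{\max}(\delta_0)\ge1$.
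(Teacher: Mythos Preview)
Your proof is correct. The reduction to Theorem~\ref{teor1} and Corollary~\ref{corl2}(1), and the direct verifications of items (1)--(3), coincide with the paper's argument almost line for line; you even justify the ``Particularly'' clause of (3) via $2\norm{z}=\norm{(x+z)-(x-z)}\le 2\lambda$, which the paper's proof does not address.

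Where you diverge is in items (4) and (5). The paper treats (4) geometrically, tracking $p_1$ and $p_2$ along arcs of $\C(x_2,\lambda)$ and $\C(x_1,\lambda)$ and appealing to a figure for the separation claim; it then proves (5) by parametrising $x_3(t)$ along $Arc^{+}_{\C(p_4,\lambda)}(S_{p_4}(x_1),S_{p_4}(x_2))$, asserting that the Busemann bisector ray sweeps continuously across $\rayo{p_3,p_4}$, choosing $t_0$ by continuity, and finally invoking (4) to obtain the separation from $L_1$. You instead pass to the one-dimensional quotient $M/\gen{x}\cong\ere$ to convert all separation statements about lines parallel to $\gen{x}$ into scalar inequalities, and you set up the explicit basis $s=z/\lambda$, $d=x/\lambda$ so that the Busemann condition in (5) becomes the single equation $(1-\delta)/\norm{a+w}=(1+\delta)/\norm{b+w}$ on the boundary branch $\sigma_{\max}(\delta)$; the convexity observation $[a,b]\subset$ unit disk gives $\sigma_{\max}\ge 1$ on $[-1,1]$, which simultaneously feeds the intermediate-value argument and delivers the separation $\overline{p_1}=\sigma_{\max}(\delta_0)\,\bar z$ with $\sigma_{\max}(\delta_0)\ge 1$. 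Thus your (5) is self-contained and does not rely on (4). The trade-off: the paper's route is shorter and visually intuitive but leaves the ``continually goes from \dots\ to \dots'' step and the arc-side claims informal; your coordinate approach is heavier in notation but makes explicit why the intermediate value theorem applies and why the chosen $p_1,p_2$ fall on the required side of $L_1$.
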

\begin{proof}
 Takes the circle $\C(p_{4},\lambda)$. If $x_{3}\in \C(p_{4},\lambda)$, then
 $p_{4}$ is the cincumcenter of the \mbox{triangle} $\triangle x_{1}x_{2}x_{3}$. By (1) from
 Theorem (\ref{teor1}), $q=\frac{p_{3}+x_{3}}{2}$ and \mbox{therefore} $p_{1}=S_{q}(x_{1})$,
 $p_{2}=S_{q}(x_{2})$ and $x_{4}=S_{q}(p_{4})$. So, by (1) from Corollary (\ref{corl2}), the
 sets $\llaves{p_{1},p_{2},p_{3},p_{3}}$ and $\llaves{x_{1},x_{2},x_{3},x_{4}}$ are
 \mbox{$\C$-orthocentric} systems.
 
\begin{figure}[h!]
 \begin{center}
  \includegraphics[scale=0.17]{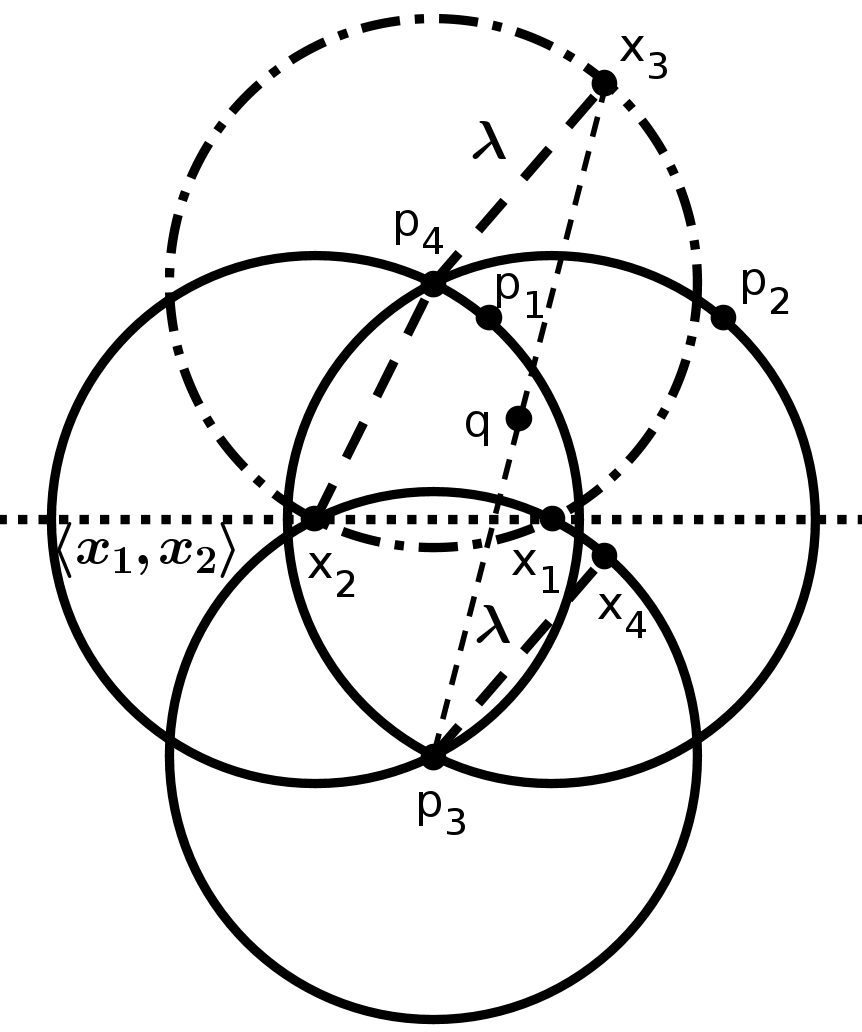}
  \caption{\label{FG12} \textit{Circles} $\C(x_{4},\lambda)$ and $\C(p_{4},\lambda)$}
 \end{center}
\end{figure}
 
(1) To prove that $p_{1}\in\C(x_{2},\lambda)$, $p_{2}\in\C(x_{1},\lambda)$ and
$x_{4}\in\C(p_{3},\lambda)$ we have to see that
\begin{equation*}
 \norm{x_{1}-p_{2}}=\norm{x_{2}-p_{1}}=\norm{p_{3}-x_{4}}=\lambda
\end{equation*}
Note that $\norm{p_{4}-x_{3}}=\lambda$, then $\lambda=2\norm{q}$ so that
\begin{equation} \label{eq2.1}
 \norm{q}=\frac{\lambda}{2}
\end{equation}
Since $p_{2}=S_{q}(x_{2})$, $p_{1}=S_{q}(x_{1})$, $x_{4}=S_{q}(p_{4})$ (see
\mbox{\textit{Figure \ref{FG12}}}), and ussing (\ref{eq2.1}), then 
\begin{eqnarray*}
 \norm{x_{1}-p_{2}}=\norm{x_{2}-p_{1}}=\norm{p_{3}-x_{4}}=2\norm{q}=\lambda
\end{eqnarray*}

(2) Without loss of generality takes the arc $Arc^{+}_{\C(p_{4},\lambda)}\parent{x_{1},x_{2}}$,
which is a closed set (see \textit{Figure \ref{FG12}}). Define the continuous parameterization
\mbox{$\alpha:[0,1]\longrightarrow Arc^{+}_{\C(p_{4},\lambda)}\parent{x_{1},x_{2}}$}, such
that
\begin{equation} \label{eq2.2}
\lim_{\substack{t\rightarrow 0}}\;\alpha(t)=x_{1}\hspace{2cm}
\lim_{\substack{t\rightarrow 1}}\;\alpha(t)=x_{2}
\end{equation}

Define the continuous function
$f:Arc^{+}_{\C(p_{4},\lambda)}\parent{x_{1},x_{2}}\longrightarrow\ere$ by:
\begin{equation} \label{eq2.3}
 f(x)=\norm{x_{2}-x}-\norm{x_{1}-x}
\end{equation}

\begin{figure}[h!]
 \begin{center}
  \includegraphics[scale=0.23]{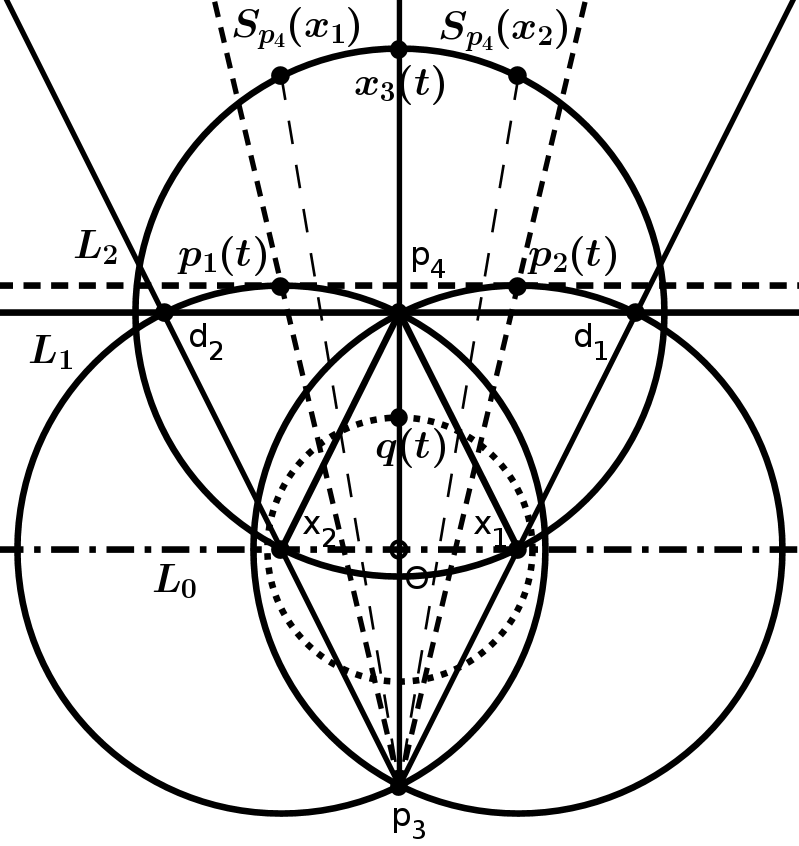}\hspace{3mm}\includegraphics[scale=0.16]{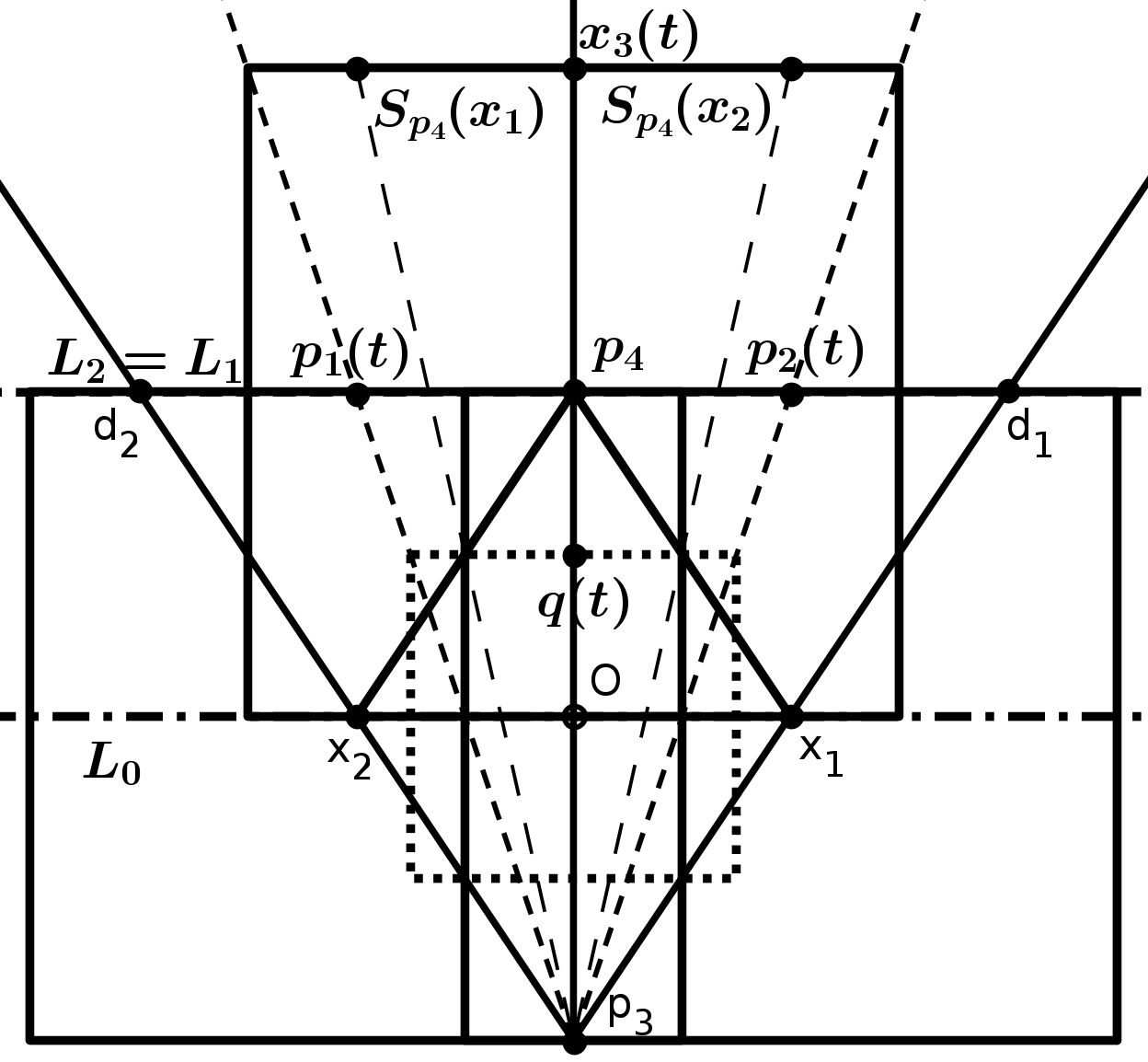}
  \caption{\label{FG13} \textit{Demonstration (3) and (5), Lemma (\ref{le1})}}
 \end{center}
\end{figure}

On the other hand, for any $x\in Arc^{+}_{\C(p_{4},\lambda)}\parent{x_{1},x_{2}}$ there
exists $t\in[0,1]$ such that $\alpha(t)=x$, then by (\ref{eq2.2}) and (\ref{eq2.3})
$f(\alpha(t))$ holds:
\begin{equation*}
\lim_{\substack{t\rightarrow 0}}\;f(\alpha(t))=\norm{x_{2}-x_{1}}>0\hspace{1cm}\text{and}
\hspace{1cm}\lim_{\substack{t\rightarrow 1}}\;f(\alpha(t))=-\norm{x_{1}-x_{2}}<0
\end{equation*}
Then, since $f$ is defined on a compact set, taking negative and positive values, there
exists $x_{3}\in Arc^{+}_{\C(p_{4},\lambda)}\parent{x_{1},x_{2}}$ such that $f(x_{3})=0$,
i.e, a $x_{3}$ that holds
\begin{equation} \label{eq2.4}
 \norm{x_{1}-x_{3}}=\norm{x_{2}-x_{3}}
\end{equation}

Now, define the points $q=\frac{x_{3}+p_{3}}{2}$, $p_{1}=S_{q}(x_{1})$ and
$p_{2}=S_{q}(x_{2})$. By (\ref{eq2.4}), and the fact that $S_{q}(x)$ is a isometry, then
$\norm{p_{3}-p_{1}}=\norm{p_{3}-p_{2}}$.

(3) To get that $p_{4}\in\gen{p_{3},\frac{p_{1}+p_{2}}{2}}$, since $p_{4}=z$ and
$p_{3}=-z$, $\frac{p_{1}+p_{2}}{2}$ need to be of the form $kz$, with $k\in\ere$ (see
\textit{Figure \ref{FG13}}). Since $\frac{p_{1}+p_{2}}{2}=2q$, then $q=\frac{kz}{2}$ and by
the equation (\ref{eq2.1})
\begin{eqnarray*}
 \frac{\lambda}{2}=\norm{q}=\norm{\frac{kz}{2}}
\end{eqnarray*}
therefore $k=\pm\frac{\lambda}{\norm{z}}$. So, $q=\pm\frac{\lambda z}{2\norm{z}}$ are the
only possible values for which the \mbox{condition} is satisfied. Then, the only pair of
points $\llaves{p_{1},p_{2}}$ would be
 $\llaves{\frac{\lambda z}{\norm{z}}-x,\frac{\lambda z}{\norm{z}}+x}$ and
 $\llaves{-\frac{\lambda z}{\norm{z}}-x,-\frac{\lambda z}{\norm{z}}+x}$.
 
(4) Since $p_{1}\notin\{p_{4},S_{x_{2}}(p_{3})\}$, $\norm{z}<\lambda$, and by the equation
(\ref{eq2.1}), we have that $p_{4}\neq\frac{p_{1}+p_{2}}{2}=2q$. Furthermore, the lines
$\gen{p_{1},p_{2}}$ and $\gen{S_{x_{1}}(p_{3}),S_{x_{2}}(p_{3})}$ are different and they are
paralles because
\begin{equation}\label{eq2.9}
S_{x_{1}}(p_{3})-S_{x_{2}}(p_{3})=2(p_{2}-p_{1})
\end{equation}
Now, if
$q\in Arc^{-}_{\C(O,\frac{\lambda}{2})}\parent{\frac{p_{4}+x_{2}}{2},\frac{p_{4}+x_{1}}{2}}$,
then $p_{1}$ lies in the $Arc^{-}_{\C(x_{2},\lambda)}\parent{S_{x_{2}}(p_{3}),p_{4}}$.
and $p_{2}$ lies in the $Arc^{-}_{\C(x_{1},\lambda)}\parent{p_{4},S_{x_{1}}(p_{3})}$
(see \textit{Figure \ref{FG14}}). Also, $\frac{p_{1}+p_{2}}{2}$ and $p_{3}$ are separated
by $L_{1}=\gen{S_{x_{2}}(p_{3}),S_{x_{1}}(p_{3})}$. Since
$\frac{p_{1}+p_{2}}{2}\in\seg{p_{1},p_{2}}$ and the equation (\ref{eq2.9}), then
$\gen{p_{1},p_{2}}$ and $p_{3}$ are separated by $L_{1}$.

\begin{figure}[h!]
 \begin{center}
  \includegraphics[scale=0.28]{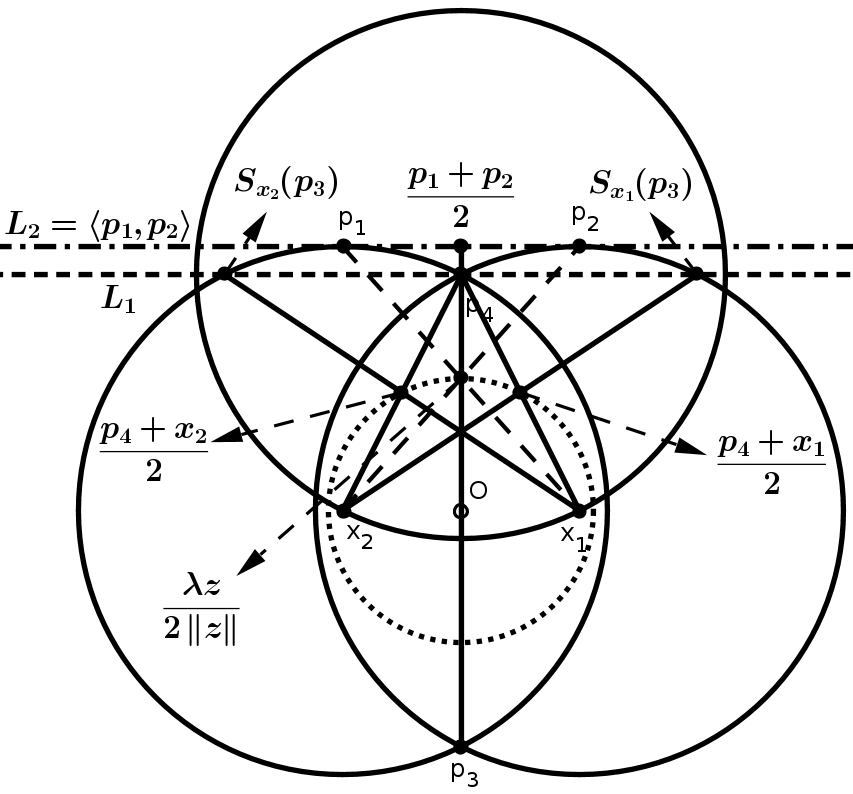}
  \caption{\label{FG14} \textit{Lines $\gen{p_{1},p_{2}}$ and
  $\gen{S_{x_{1}}(p_{3}),S_{x_{2}}(p_{3})}$}}
 \end{center}
\end{figure}

(5) For any $t\in(0,1)$, define the continuous functions
$$x_{3}(t):[0,1]\longrightarrow Arc^{+}_{\C(p_{4},\lambda)}\parent{S_{p_{4}}(x_{1}),S_{p_{4}}(x_{2})}$$
$q(t)=\frac{p_{3}+x_{3}(t)}{2}$ and $p_{i}(t)=2q(t)-x_{i}$ for $i=1,2$. When $t$ is moving
from $0$ to $1$, the ray $A_{B}\parent{\rayo{p_{3},p_{1}(t)},\rayo{p_{3},p_{2}(t)}}$
continually goes from $A_{B}\parent{\rayo{p_{3},d_{2}},\rayo{p_{3},p_{4}}}$ to
$A_{B}\parent{\rayo{p_{3},p_{4}},\rayo{p_{3},d_{1}}}$ (see \textit{Figure \ref{FG13}}).
So, there exists $t_{0}\in(0,1)$ such that
\begin{equation*}
 A_{B}\parent{\rayo{p_{3},p_{1}(t_{0})},\rayo{p_{3},p_{2}(t_{0})}}=\rayo{p_{3},p_{4}}
\end{equation*}
Now, let $p_{1}=p_{1}(t_{0})$ and $p_{2}=p_{2}(t_{0})$, then $p_{1}$ and $p_{2}$ have the
desired property. By (4) from this Lemma, $p_{3}$ and the line $\gen{p_{1},p_{2}}$ are
separated by the line $L_{1}$ passing through $p_{4}$ parallel to $\gen{p_{1},p_{2}}$, or
$\gen{p_{1},p_{2}}=L_{1}$.
\end{proof}


\begin{teor}\label{teor2}
 A Minkowski plane $(M,\norm{\circ})$ is euclidean if and only if for any
 \mbox{$\C$-orthocentric} system $\llaves{p_{1},p_{2},p_{3},p_{4}}$ the relation
 \begin{equation*}
  (p_{i}-p_{j})\bot_{B}(p_{k}-p_{l})
 \end{equation*}
holds, where $\llaves{i,j,k,l}=\llaves{1,2,3,4}$
\end{teor}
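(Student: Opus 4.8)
The plan is to prove both implications by translating the $\C$-orthocentric condition into statements about isosceles and Birkhoff orthogonality, then invoking Lemma~\ref{lepre2}. For the easy direction, suppose $(M,\norm{\circ})$ is Euclidean. Given a $\C$-orthocentric system $\llaves{p_{1},p_{2},p_{3},p_{4}}$, by definition some $p_{i}$ is the $\C$-orthocenter of $\triangle p_{j}p_{k}p_{l}$, so there is a circumcenter (say $x_{i}$ of $\triangle p_{j}p_{k}p_{l}$) with the relations of Theorem~\ref{teor1} in force; in particular $p_{k}-p_{l}$ is a chord direction of a circle centered at a point equidistant from $p_{k}$ and $p_{l}$, and $p_{i}-p_{j}$ is (up to the symmetry $S_{q}$) the corresponding diameter direction $x_{i}-x_{j}$ by part (2) of Theorem~\ref{teor1}. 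In a Euclidean plane a diameter is Birkhoff orthogonal to the chords it bisects, so $(p_{i}-p_{j})\bot_{B}(p_{k}-p_{l})$, and by the symmetry of the labels this holds for all choices of $\llaves{i,j,k,l}=\llaves{1,2,3,4}$.

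For the converse, I would argue contrapositively: assume $(M,\norm{\circ})$ is not Euclidean and produce a $\C$-orthocentric system violating the Birkhoff relation. By Lemma~\ref{lepre2}, non-Euclidicity gives points $x,z\in M$ with $x\bot_{I}z$ but $x\not\bot_{B}z$. Normalize so that $O$ is the origin and apply Lemma~\ref{le1} with these $x,z$: set $p_{3}=-z$, $p_{4}=z$, $x_{1}=x$, $x_{2}=-x$, $\lambda=\norm{x+z}$. Part (3) of Lemma~\ref{le1} (the ``particular'' case, using the segment $\seg{p_{3},\frac{p_{1}+p_{2}}{2}}$) produces a $\C$-orthocentric system $\llaves{p_{1},p_{2},p_{3},p_{4}}$ in which $p_{1}=\frac{\lambda z}{\norm{z}}-x$ and $p_{2}=\frac{\lambda z}{\norm{z}}+x$, so that $p_{2}-p_{1}=2x$ and $p_{4}-p_{3}=2z$. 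If the Theorem's relation held, we would have $(p_{4}-p_{3})\bot_{B}(p_{2}-p_{1})$, i.e. $2z\bot_{B}2x$, hence $z\bot_{B}x$. Then I would need the standard fact (provable via Lemma~\ref{lepre1}, or directly) that in a Minkowski plane $x\bot_{I}z$ together with $z\bot_{B}x$ forces $x\bot_{B}z$: indeed $x\bot_{I}z$ means $\norm{x+z}=\norm{x-z}$, and combined with $z\bot_{B}x$ one shows the norm on the line $\gen{x,z}$ restricted appropriately is minimized at the right place; this contradicts $x\not\bot_{B}z$.

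The delicate point — and the step I expect to be the main obstacle — is the last implication of the converse: extracting a contradiction with $x\not\bot_B z$ from the combination $x\bot_I z$ and $z\bot_B x$. One clean route is to use part (3) of Lemma~\ref{le1} more carefully together with part (5): part (5) gives a $\C$-orthocentric system where $p_{4}\in A_{B}(\rayo{p_{3},p_{1}},\rayo{p_{3},p_{2}})$ with $\norm{p_{3}-p_{1}}=\norm{p_{3}-p_{2}}$, so the Busemann bisector coincides with $\rayo{p_{3},\frac{p_{1}+p_{2}}{2}}$, and then the Birkhoff relation $(p_{i}-p_{j})\bot_B(p_{k}-p_{l})$ applied to the appropriate index choice says a bisected-chord direction is Birkhoff orthogonal to the chord; pushing this through the isometry $S_q$ and Theorem~\ref{teor1} yields an isosceles-orthogonal pair that is also Birkhoff orthogonal in a way that, via Lemma~\ref{lepre1}, forces some $x\bot_I y \Rightarrow x\bot_I ty$ for $t>1$, giving Euclidicity. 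I would try the direct argument first and fall back on the Lemma~\ref{lepre1} route if the elementary manipulation does not close cleanly; either way the geometric content is that ``diameter $\bot_B$ chord for all chords'' is equivalent to the characterization(s) already recorded in Lemmas~\ref{lepre1} and~\ref{lepre2}.
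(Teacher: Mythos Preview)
Your overall strategy matches the paper's: build a $\C$-orthocentric system from an isosceles-orthogonal pair via Lemma~\ref{le1}, then feed the hypothesis into Lemma~\ref{lepre2}. However, you create an artificial obstacle by reading the hypothesis as $(p_{4}-p_{3})\bot_{B}(p_{2}-p_{1})$, which yields $z\bot_{B}x$, and then you try to pass to $x\bot_{B}z$. That passage is \emph{not} a standard fact: Birkhoff orthogonality is not symmetric in general Minkowski planes, and the extra condition $x\bot_{I}z$ does not repair this (your sketched argument ``the norm on the line is minimized at the right place'' does not go through). The entire third paragraph --- the fallback via part~(5) of Lemma~\ref{le1} and Lemma~\ref{lepre1} --- is an attempt to patch a self-inflicted problem.

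The fix is immediate: the hypothesis gives the relation for \emph{every} choice of indices with $\llaves{i,j,k,l}=\llaves{1,2,3,4}$, so simply use $(p_{2}-p_{1})\bot_{B}(p_{4}-p_{3})$. This is exactly what the paper does. Since $p_{2}-p_{1}=x_{1}-x_{2}=2x$ by Theorem~\ref{teor1}(2) (no need to invoke the explicit coordinates from Lemma~\ref{le1}(3); any choice of $x_{3}$ in the construction gives this) and $p_{4}-p_{3}=2z$, you obtain $x\bot_{B}z$ directly, and Lemma~\ref{lepre2} finishes the converse in one line. With this correction the argument collapses to the paper's short proof; parts~(3) and~(5) of Lemma~\ref{le1} and Lemma~\ref{lepre1} are not needed here.
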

\begin{proof}
If $(M,\norm{\circ})$ is euclidean, then for any $\C$-orthocentric system
$\llaves{p_{1},p_{2},p_{3},p_{4}}$, $p_{i}$ is the $\C$-orthocenter of
$\triangle p_{j}p_{k}p_{l}$ for $\llaves{i,j,k,l}=\llaves{1,2,3,4}$. Furthermore
 \begin{equation*}
  (p_{i}-p_{j})\bot_{B}(p_{k}-p_{l})
 \end{equation*}

Conversely, for any $x,y\in M$ with $x\bot_{I}y$, let
\begin{equation*}
 p_{4}=y \hspace{1cm} p_{3}=-y \hspace{1cm} x_{1}=x \hspace{1cm} x_{2}=-x
\end{equation*}
By Lemma (\ref{le1}), there are two points $p_{1}$ and $p_{2}$ such that
$\llaves{p_{1},p_{2},p_{3},p_{4}}$ is a \mbox{$\C$-orthocentric} system (see
\textit{Figure \ref{FG12}}). By Theorem (\ref{teor1}) we have
\begin{equation*}
 p_{2}-p_{1}=x_{1}-x_{2}=2x
\end{equation*}
Since $(p_{2}-p_{1})\bot_{B}(p_{4}-p_{3})$ and $p_{4}-p_{3}=2y$, then $x\bot_{B}y$.
So, by Lemma (\ref{lepre2}), $(M,\norm{\circ})$ is euclidean.
\end{proof}


\begin{teor}\label{teor3}
 A Minkowski plane $(M,\norm{\circ})$ is euclidean if and only if for any
 \mbox{$\C$-orthocentric} system $\llaves{p_{1},p_{2},p_{3},p_{4}}$, with
 $\norm{p_{3}-p_{1}}=\norm{p_{3}-p_{2}}$, it holds that
 \mbox{$p_{4}\in\gen{p_{3},\frac{p_{1}+p_{2}}{2}}$}.
\end{teor}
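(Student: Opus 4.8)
The plan is to prove both implications, using the machinery already developed, with Lemma~\ref{lepre1} as the bridge to Euclidianity in the hard direction.

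\medskip

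\textbf{The easy direction.} Suppose $(M,\norm{\circ})$ is Euclidean. Given a $\C$-orthocentric system $\llaves{p_{1},p_{2},p_{3},p_{4}}$ with $\norm{p_{3}-p_{1}}=\norm{p_{3}-p_{2}}$, the point $p_{3}$ lies on the perpendicular bisector of $\seg{p_{1},p_{2}}$, which in the Euclidean plane is exactly the line through $\frac{p_{1}+p_{2}}{2}$ orthogonal to $p_{2}-p_{1}$. On the other hand, by Theorem~\ref{teor1}(2) applied to this system, $p_{4}-p_{3}$ is parallel to $x_{i}-x_{j}$ for the associated circumcenter configuration; more directly, since $p_{4}$ is the $\C$-orthocenter of $\triangle p_{1}p_{2}p_{3}$, Theorem~\ref{teor2} (or classical Euclidean geometry) gives $(p_{4}-p_{3})\bot_{B}(p_{2}-p_{1})$, so $p_{4}$ also lies on that same perpendicular line through $\frac{p_{1}+p_{2}}{2}$. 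Hence $p_{4}\in\gen{p_{3},\frac{p_{1}+p_{2}}{2}}$.

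\medskip

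\textbf{The hard direction.} Assume the stated property holds for every such $\C$-orthocentric system; we must deduce Euclidianity. The strategy is to feed it into Lemma~\ref{lepre1}: it suffices to show that whenever $x\bot_{I}z$ there is $t>1$ with $x\bot_{I}tz$. Fix $x,z$ with $x\bot_{I}z$ and put $\lambda=\norm{x+z}$. Apply Lemma~\ref{le1} with these choices ($p_{3}=-z$, $p_{4}=z$, $x_{1}=x$, $x_{2}=-x$). By parts (2) and (5) of Lemma~\ref{le1}, there exist points $p_{1},p_{2}$ making $\llaves{p_{1},p_{2},p_{3},p_{4}}$ a $\C$-orthocentric system with $\norm{p_{3}-p_{1}}=\norm{p_{3}-p_{2}}$ \emph{and} with $p_{4}\in A_{B}(\rayo{p_{3},p_{1}},\rayo{p_{3},p_{2}})$. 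Our hypothesis then forces $p_{4}\in\gen{p_{3},\frac{p_{1}+p_{2}}{2}}$, and by Lemma~\ref{le1}(3) this pins down $p_{1},p_{2}$ (up to sign) to be $\pm\frac{\lambda z}{\norm{z}}-x$ and $\pm\frac{\lambda z}{\norm{z}}+x$; choosing the sign appropriately (the ``particularly'' clause of Lemma~\ref{le1}(3), since $p_{4}$ lies on the segment side) we get $p_{1}=\frac{\lambda z}{\norm{z}}-x$, $p_{2}=\frac{\lambda z}{\norm{z}}+x$.

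\medskip

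\textbf{Extracting isosceles orthogonality.} Now translate the condition $\norm{p_{3}-p_{1}}=\norm{p_{3}-p_{2}}$ back into a statement about $x$ and $z$. With $p_{3}=-z$ and the explicit $p_{1},p_{2}$ above, $p_{3}-p_{1}=-z-\frac{\lambda z}{\norm{z}}+x=x-\bigl(1+\tfrac{\lambda}{\norm{z}}\bigr)z$ and $p_{3}-p_{2}=x+\bigl(-1+\tfrac{\lambda}{\norm{z}}\bigr)z$ — wait, more carefully $p_{3}-p_{2}=-z-\frac{\lambda z}{\norm z}-x=-x-\bigl(1+\tfrac{\lambda}{\norm z}\bigr)z$, so in fact $\norm{p_{3}-p_{1}}=\norm{p_{3}-p_{2}}$ reads $\norm{x-cz}=\norm{x+cz}$ where $c=1+\tfrac{\lambda}{\norm{z}}>1$. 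That is precisely $x\bot_{I}(cz)$ with $c>1$, so by Lemma~\ref{lepre1} the plane is Euclidean. The main obstacle is the bookkeeping in the hard direction: one must verify that Lemma~\ref{le1}(5) really produces a system to which part (3) applies with the \emph{segment} (not just line) conclusion, so that the explicit coordinates for $p_{1},p_{2}$ are available with the correct signs; and one must double-check the arithmetic showing $\norm{p_{3}-p_{1}}=\norm{p_{3}-p_{2}}$ collapses to $x\bot_{I}(cz)$ with a genuinely larger multiple $c>1$ rather than $c=1$ (which would be vacuous). Everything else is assembling already-proven pieces.
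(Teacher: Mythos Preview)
Your overall strategy matches the paper's: use Lemma~\ref{le1}(2) to manufacture a $\C$-orthocentric system with $\norm{p_3-p_1}=\norm{p_3-p_2}$, invoke the hypothesis to place $p_4$ on $\gen{p_3,\frac{p_1+p_2}{2}}$, read off the explicit form of $p_1,p_2$ from Lemma~\ref{le1}(3), and convert $\norm{p_3-p_1}=\norm{p_3-p_2}$ into $x\bot_I t z$ with $t=1+\lambda/\norm{z}>1$, finishing via Lemma~\ref{lepre1}. The arithmetic in your last paragraph is correct.

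There is, however, a genuine gap in how you produce $p_1,p_2$. You write that ``by parts (2) and (5) of Lemma~\ref{le1}, there exist points $p_1,p_2$'' with \emph{both} $\norm{p_3-p_1}=\norm{p_3-p_2}$ \emph{and} $p_4\in A_B(\rayo{p_3,p_1},\rayo{p_3,p_2})$. But parts (2) and (5) are separate existence statements, each producing its own pair; nothing in the lemma says a single pair enjoys both properties simultaneously. More to the point, part (5) is entirely unnecessary here: once part (2) hands you a pair with $\norm{p_3-p_1}=\norm{p_3-p_2}$, the hypothesis of the theorem \emph{itself} yields $p_4\in\gen{p_3,\frac{p_1+p_2}{2}}$, and you can go straight to part (3). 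Drop the appeal to (5) altogether.

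You are right to flag the sign ambiguity in Lemma~\ref{le1}(3); the paper simply writes ``we can take'' the $+$ sign without further comment. Your attempt to resolve it via the ``particularly'' clause and part (5) does not work as written, for the reason above. If you want to close this gap rigorously, note that the construction in the proof of Lemma~\ref{le1}(2) chooses $x_3$ on a specific arc, which constrains $q$ and hence the sign; alternatively, observe that even with the other sign one obtains $x\bot_I s z$ for some $s\neq\pm1$, and a short rescaling argument still feeds Lemma~\ref{lepre1}. Finally, the exploratory ``wait, more carefully'' passage should be cleaned up into a single correct computation in a final version.
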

\begin{proof}
 If $(M,\norm{\circ})$ is euclidean, any $\C$-orthocentric system holds that
 $p_{4}\in\gen{p_{3},\frac{p_{1}+p_{2}}{2}}$ if $\norm{p_{3}-p_{1}}=\norm{p_{3}-p_{2}}$.

 Conversely, let $x,y\in(M,\norm{\circ})$ such that $x\bot_{I}y$, by Lemma (\ref{lepre1}) is
 enough to prove that there exists a $t>1$ such that $x\bot_{I}ty$. Clearly it exists
 such $t$ if $x=O$ or $y=O$. Now, let $x,y\in M\smallsetminus\llaves{O}$ with $x\bot_{I}y$.
 Define $\lambda=\norm{x+y}$ and
 \begin{equation*}
  p_{4}=y\hspace{1cm}p_{3}=-y\hspace{1cm}x_{1}=x\hspace{1cm}x_{2}=-x
 \end{equation*}
then by (2) from Lemma (\ref{le1}) we have points $p_{1}$ and $p_{2}$ such that
\begin{equation}\label{eq2.10}
 \norm{p_{3}-p_{1}}=\norm{p_{3}-p_{2}}
\end{equation}
By hypothesis $p_{4}\in\gen{p_{3},\frac{p_{1}+p_{2}}{2}}$, and ussing (3) from Lemma
(\ref{le1}), we can take $p_{1}=\frac{\lambda y}{\norm{y}}-x$ and
$p_{2}=\frac{\lambda y}{\norm{y}}+x$. So,
\begin{equation*}
 \norm{p_{3}-p_{1}}=\norm{x-\parent{1+\frac{\lambda}{\norm{y}}}y}
\end{equation*}
\begin{equation*}
 \norm{p_{3}-p_{2}}=\norm{x+\parent{1+\frac{\lambda}{\norm{y}}}y}
\end{equation*}
and by the equation (\ref{eq2.10}), then $x\bot_{I}ty$ where
$t=1+\frac{\lambda}{\norm{y}}>1$, since $\lambda>0$.
\end{proof}


\begin{teor}\label{teor4}
 A Minkowski plane $(M,\norm{\circ})$ is euclidean if and only if for any
 \mbox{$\C$-orthocentric} system $\llaves{p_{1},p_{2},p_{3},p_{4}}$, the equality
 $\norm{p_{3}-p_{1}}=\norm{p_{3}-p_{2}}$ holds, \mbox{whenever}
 $p_{4}\in\gen{p_{3},\frac{p_{1}+p_{2}}{2}}$.
\end{teor}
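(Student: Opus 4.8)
The plan is to follow the pattern of the proof of Theorem~\ref{teor3}, but with the roles of the two conditions interchanged: instead of producing a $\C$-orthocentric system with $\norm{p_{3}-p_{1}}=\norm{p_{3}-p_{2}}$ and then reading off that $p_{4}$ lies on the median line, I will produce a system in which $p_{4}$ already lies on the median line and let the hypothesis hand me the isosceles equality. The forward implication is the classical fact about orthocentric systems in the Euclidean plane: a $\C$-orthocentric system $\{p_{1},p_{2},p_{3},p_{4}\}$ is an orthocentric system in the usual sense, so $\gen{p_{3},p_{4}}$ is the altitude of $\triangle p_{1}p_{2}p_{3}$ from $p_{3}$ while $\gen{p_{3},\frac{p_{1}+p_{2}}{2}}$ is the median from $p_{3}$; these two lines coincide exactly when $\triangle p_{1}p_{2}p_{3}$ is isosceles with $\norm{p_{3}-p_{1}}=\norm{p_{3}-p_{2}}$, so $p_{4}\in\gen{p_{3},\frac{p_{1}+p_{2}}{2}}$ forces that equality.

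For the converse I would fix $x,y\in M$ with $x\bot_{I}y$ and, invoking Lemma~\ref{lepre1}, reduce the problem to exhibiting a number $t>1$ with $x\bot_{I}ty$. The cases $x=O$ or $y=O$ are immediate, so I may assume $x,y\neq O$; then $\lambda:=\norm{x+y}>0$. Now apply Lemma~\ref{le1} with $z=y$, taking $p_{4}=y$, $p_{3}=-y$, $x_{1}=x$, $x_{2}=-x$: by part~(3) of that lemma (its last clause), used in the direction where the displayed choice of $p_{1},p_{2}$ forces $p_{4}\in[p_{3},\frac{p_{1}+p_{2}}{2}]$, there is a $\C$-orthocentric system $\{p_{1},p_{2},p_{3},p_{4}\}$ with
\begin{equation*}
 p_{1}=\frac{\lambda y}{\norm{y}}-x,\qquad p_{2}=\frac{\lambda y}{\norm{y}}+x,\qquad p_{4}\in\gen{p_{3},\tfrac{p_{1}+p_{2}}{2}}.
\end{equation*}

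The hypothesis of the theorem now applies to this particular system and yields $\norm{p_{3}-p_{1}}=\norm{p_{3}-p_{2}}$. Since $p_{3}-p_{1}=x-\parent{1+\frac{\lambda}{\norm{y}}}y$ and $p_{3}-p_{2}=-\parent{x+\parent{1+\frac{\lambda}{\norm{y}}}y}$, this equality reads $\norm{x-ty}=\norm{x+ty}$ with $t=1+\frac{\lambda}{\norm{y}}$, i.e.\ $x\bot_{I}ty$; and $t>1$ because $\lambda>0$. Lemma~\ref{lepre1} then gives that $(M,\norm{\circ})$ is Euclidean, finishing the proof.

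The point needing the most care is the bookkeeping when calling Lemma~\ref{le1}: one must confirm that the prescribed $p_{1},p_{2}$ — equivalently the point $q=\frac{\lambda y}{2\norm{y}}\in\C(O,\lambda/2)$, equivalently the vertex $x_{3}=\parent{1+\frac{\lambda}{\norm{y}}}y\in\C(p_{4},\lambda)$ — really is an admissible realization of that lemma's configuration (here it helps that $x\bot_{I}y$ automatically forces $\norm{y}\leq\lambda$ by the triangle inequality, so $p_{4}$ indeed sits on the segment $[p_{3},\frac{p_{1}+p_{2}}{2}]$), so that $\{p_{1},p_{2},p_{3},p_{4}\}$ is genuinely a $\C$-orthocentric system to which the theorem's hypothesis may be applied; once that is settled the remaining computation is routine. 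One should also keep the index roles straight, since in the hypothesis the points $p_{3}$ and $p_{4}$ are distinguished.
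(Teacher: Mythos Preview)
Your proposal is correct and follows essentially the same route as the paper: reduce via Lemma~\ref{lepre1} to producing $t>1$ with $x\bot_{I}ty$, invoke part~(3) of Lemma~\ref{le1} to obtain the $\C$-orthocentric system with $p_{1}=\frac{\lambda y}{\norm{y}}-x$, $p_{2}=\frac{\lambda y}{\norm{y}}+x$ and $p_{4}\in\gen{p_{3},\frac{p_{1}+p_{2}}{2}}$, then apply the hypothesis and compute $t=1+\frac{\lambda}{\norm{y}}$. Your additional bookkeeping (the triangle-inequality check that $\norm{y}\le\lambda$, and the explicit Euclidean argument for the forward direction) is correct and simply fills in details the paper leaves implicit.
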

\begin{proof}
If $(M,\norm{\circ})$ is euclidean, any $\C$-orthocentric system, with
$p_{4}\in\gen{p_{3},\frac{p_{1}+p_{2}}{2}}$, holds that
$\norm{p_{3}-p_{1}}=\norm{p_{3}-p_{2}}$.
 
Conversely, let $x,y\in(M,\norm{\circ})$ with $x\bot_{I}y$, by Lemma (\ref{lepre1}) is enough
to prove that there exists $t>1$ such that $x\bot_{I}ty$. Clearly it exists such $t$ if
$x=O$ or $y=O$. Now, let $x,y\in M\smallsetminus\llaves{O}$ with $x\bot_{I}y$. Define
$\lambda=\norm{x+y}$ and
 \begin{equation*}
  p_{4}=y\hspace{1cm}p_{3}=-y\hspace{1cm}x_{1}=x\hspace{1cm}x_{2}=-x
 \end{equation*}
then by (3) from Lemma (\ref{le1}) we have the points $p_{1}=\pm\frac{\lambda y}{\norm{y}}-x$
and $p_{2}=\pm\frac{\lambda y}{\norm{y}}+x$. Take the values
$p_{1}=\frac{\lambda y}{\norm{y}}-x$ and $p_{2}=\frac{\lambda y}{\norm{y}}+x$, and since
$\norm{p_{3}-p_{1}}=\norm{p_{3}-p_{2}}$, then $x\bot_{I}ty$ with
$t=1+\frac{\lambda}{\norm{y}}>1$.
\end{proof}


\begin{teor}\label{teor5}
 A Minkowski plane $\parent{M,\norm{\circ}}$ is euclidean if and only if for any
 \mbox{$\C$-orthocentric} system $\llaves{p_{1},p_{2},p_{3},p_{4}}$, $p_{4}$ lies on the line
 containing $A_{B}\parent{\rayo{p_{3},p_{1}},\rayo{p_{3},p_{2}}}$ whenever
 $\norm{p_{3}-p_{1}}=\norm{p_{3}-p_{2}}$.
\end{teor}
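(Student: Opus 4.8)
The plan is to recognize that, under its own hypothesis, Theorem \ref{teor5} is just Theorem \ref{teor3} in different clothing, and then to invoke Theorem \ref{teor3}. The bridge is the elementary identity recorded right after the definition of the Busemann angular bisector: if the rays $\rayo{p_3,p_1}$ and $\rayo{p_3,p_2}$ are non-collinear and $\norm{p_3-p_1}=\norm{p_3-p_2}$, then $A_B\parent{\rayo{p_3,p_1},\rayo{p_3,p_2}}=\rayo{p_3,\frac{p_1+p_2}{2}}$, so the line carrying that bisector is exactly $\gen{p_3,\frac{p_1+p_2}{2}}$. Hence, for a $\C$-orthocentric system with $\norm{p_3-p_1}=\norm{p_3-p_2}$ for which the bisector is defined, the assertion ``$p_4$ lies on the line containing $A_B\parent{\rayo{p_3,p_1},\rayo{p_3,p_2}}$'' is literally ``$p_4\in\gen{p_3,\frac{p_1+p_2}{2}}$''. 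So for the forward implication I would simply note: if $(M,\norm{\circ})$ is euclidean, Theorem \ref{teor3} gives $p_4\in\gen{p_3,\frac{p_1+p_2}{2}}$ for every such system, and by the identity this line contains the bisector.

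For the converse I would mimic the proof of Theorem \ref{teor3} almost verbatim. Given $x,y\in M$ with $x\bot_{I}y$, by Lemma (\ref{lepre1}) it suffices to exhibit $t>1$ with $x\bot_{I}ty$; the cases $x=O$ or $y=O$ are trivial, so assume $x,y\in M\smallsetminus\llaves{O}$, whence $x\not\parallel y$. Put $p_4=y$, $p_3=-y$, $x_1=x$, $x_2=-x$, $\lambda=\norm{x+y}$, and use $(2)$ from Lemma (\ref{le1}) to obtain $p_1,p_2$ with $\norm{p_3-p_1}=\norm{p_3-p_2}$ forming a $\C$-orthocentric system, chosen (which is possible, see below) so that $\rayo{p_3,p_1},\rayo{p_3,p_2}$ are non-collinear. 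Then the bisector equals $\rayo{p_3,\frac{p_1+p_2}{2}}$, so by hypothesis $p_4\in\gen{p_3,\frac{p_1+p_2}{2}}$, and $(3)$ from Lemma (\ref{le1}) forces $p_1=\pm\frac{\lambda y}{\norm{y}}-x$ and $p_2=\pm\frac{\lambda y}{\norm{y}}+x$. With the $+$ choice, the equality $\norm{p_3-p_1}=\norm{p_3-p_2}$ reads $\norm{x-\parent{1+\frac{\lambda}{\norm{y}}}y}=\norm{x+\parent{1+\frac{\lambda}{\norm{y}}}y}$, i.e.\ $x\bot_{I}ty$ with $t=1+\frac{\lambda}{\norm{y}}>1$; Lemma (\ref{lepre1}) then yields euclideanity.

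There is no genuinely hard step here: the content is the bookkeeping observation that the equal-length Busemann bisector is the ray $\rayo{p_3,\frac{p_1+p_2}{2}}$, which collapses Theorem \ref{teor5} onto Theorem \ref{teor3}. The only point demanding a moment's care is the degenerate configuration in which $p_1,p_2,p_3$ are collinear: then $p_3=\frac{p_1+p_2}{2}$ and the bisector is undefined, so such systems fall vacuously outside the hypothesis. In the converse one sidesteps this by the choice of $x_3$: the admissible values of $x_3$ form the intersection of $\C(p_4,\lambda)$ with the isosceles-orthogonality locus of $x$, and since that locus passes through the center $p_4$ of the circle and $x\not\parallel y$, one can pick $x_3$ — hence $p_1,p_2$ — off the line through $O$ and $x$, which makes $\rayo{p_3,p_1},\rayo{p_3,p_2}$ non-collinear; in the forward direction one restricts, as one must, to those $\C$-orthocentric systems for which the bisector in the statement actually exists.
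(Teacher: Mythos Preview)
Your proposal is correct and follows essentially the same approach as the paper: both hinge on the identity that, when $\norm{p_{3}-p_{1}}=\norm{p_{3}-p_{2}}$, the Busemann bisector $A_{B}\parent{\rayo{p_{3},p_{1}},\rayo{p_{3},p_{2}}}$ is $\rayo{p_{3},\frac{p_{1}+p_{2}}{2}}$, which collapses the present statement onto Theorem~\ref{teor3}. The only cosmetic difference is that the paper's converse is more economical: it simply observes that the hypothesis here implies, via this identity, the hypothesis of Theorem~\ref{teor3} for \emph{every} $\C$-orthocentric system with $\norm{p_{3}-p_{1}}=\norm{p_{3}-p_{2}}$, and then quotes Theorem~\ref{teor3} outright, whereas you rerun the computations from that theorem's proof (the setup $p_{4}=y$, $p_{3}=-y$, Lemma~\ref{le1}(2),(3), and $t=1+\lambda/\norm{y}$); your extra care with the degenerate collinear case is a point the paper passes over silently.
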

\begin{proof}
 If $(M,\norm{\circ})$ is euclidean, any $\C$-orthocentric system with
 \mbox{$\norm{p_{3}-p_{1}}=\norm{p_{3}-p_{2}}$}, holds that $p_{4}$ lies on the line
 containing $A_{B}\parent{\rayo{p_{3},p_{1}},\rayo{p_{3},p_{2}}}$.

Conversely, let $x,y\in(M,\norm{\circ})$ with $x\bot_{I}y$. By Theorem (\ref{teor3}), we
only have to proof that for any $\C$-orthocentric system $\llaves{p_{1},p_{2},p_{3},p_{4}}$,
$p_{4}\in\gen{p_{3},\frac{p_{1}+p_{2}}{2}}$ whenever
\mbox{$\norm{p_{3}-p_{1}}=\norm{p_{3}-p_{2}}$}. By the definition of Busemann angular
bisectors and the fact that \mbox{$\norm{p_{3}-p_{1}}=\norm{p_{3}-p_{2}}$}, then
\begin{equation*}
 A_{B}\parent{\rayo{p_{3},p_{1}},\rayo{p_{3},p_{2}}}=\rayo{p_{3},\frac{p_{1}+p_{2}}{2}}
\end{equation*}
Thus $\gen{p_{3},\frac{p_{1}+p_{2}}{2}}$ is the line containing
$A_{B}\parent{\rayo{p_{3},p_{1}}\rayo{p_{3},p_{2}}}$ and therefore
$p_{4}\in\gen{p_{3},\frac{p_{1}+p_{2}}{2}}$.
\end{proof}


\begin{teor}\label{teor5}
 A Minkowski plane $\parent{M,\norm{\circ}}$ is euclidean if and only if for any
 \mbox{$\C$-orthocentric} system $\llaves{p_{1},p_{2},p_{3},p_{4}}$, the equality
 $\norm{p_{3}-p_{1}}=\norm{p_{3}-p_{2}}$ holds whenever $p_{4}$ lies on the line containing
 $A_{B}\parent{\rayo{p_{3},p_{1}},\rayo{p_{3},p_{2}}}$.
\end{teor}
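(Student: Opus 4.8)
The ``only if'' direction is the classical incidence of altitude and internal angle bisector in the Euclidean plane (for a non-degenerate orthocentric system the line through a vertex and the orthocenter is the altitude at that vertex, and it coincides with the bisector there exactly when the triangle is isosceles at that vertex), so I would dispatch it in one line and concentrate on the converse.

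For the converse the plan is to verify the hypothesis of Lemma~\ref{lepre1}: for arbitrary $x,y\in M$ with $x\bot_I y$ I must produce $t>1$ with $x\bot_I ty$. If $x=O$ or $y=O$ any $t>1$ works, so assume $x,y\neq O$ and set $\lambda=\norm{x+y}$, $p_4=y$, $p_3=-y$, $x_1=x$, $x_2=-x$, which is exactly the data required by Lemma~\ref{le1}. By part~(5) of that lemma there are points $p_1,p_2$ such that $\llaves{p_1,p_2,p_3,p_4}$ is a $\C$-orthocentric system and $p_4\in A_B(\rayo{p_3,p_1},\rayo{p_3,p_2})$; in particular $p_4$ lies on the line carrying this Busemann bisector, so the hypothesis of the theorem applies and gives $\norm{p_3-p_1}=\norm{p_3-p_2}$. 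Since the two distances agree, $A_B(\rayo{p_3,p_1},\rayo{p_3,p_2})=\rayo{p_3,\tfrac{p_1+p_2}{2}}$, hence $p_4\in\gen{p_3,\tfrac{p_1+p_2}{2}}$.

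Now I would feed this into Lemma~\ref{le1}(3): since $p_4\in\gen{p_3,\tfrac{p_1+p_2}{2}}$, the pair $\llaves{p_1,p_2}$ equals $\llaves{\pm\tfrac{\lambda y}{\norm{y}}-x,\ \pm\tfrac{\lambda y}{\norm{y}}+x}$, and the minus sign is impossible because $p_4$ lies on the \emph{ray} $\rayo{p_3,\tfrac{p_1+p_2}{2}}$, not merely on the line through it: with the minus sign $\tfrac{p_1+p_2}{2}=-\tfrac{\lambda}{\norm{y}}y$, and reading coefficients of $y$ in $\ere y$, the ray issuing from $p_3=-y$ toward $-\tfrac{\lambda}{\norm{y}}y$ never reaches $p_4=y$, because $\norm{y}\le\norm{x+y}=\lambda$. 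This bound is elementary: $2y=(x+y)-(x-y)$, so by the triangle inequality and $\norm{x-y}=\norm{x+y}$ one gets $2\norm{y}\le 2\norm{x+y}$. Hence $p_1=\tfrac{\lambda y}{\norm{y}}-x$ and $p_2=\tfrac{\lambda y}{\norm{y}}+x$. Writing $t=1+\tfrac{\lambda}{\norm{y}}$, one then has
\begin{equation*}
 p_3-p_1=x-ty\qquad\text{and}\qquad p_3-p_2=-(x+ty),
\end{equation*}
so $\norm{p_3-p_1}=\norm{p_3-p_2}$ says precisely $x\bot_I ty$, and $t>1$ since $\lambda>0$ (note $x+y\neq O$, as $y=-x$ with $x\bot_I y$ forces $x=O$). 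By Lemma~\ref{lepre1}, $(M,\norm{\circ})$ is Euclidean. Structurally this repeats the scheme used for Theorems~\ref{teor3} and~\ref{teor4}, the only new point being that the hypothesis is triggered through the Busemann-bisector clause instead of the midpoint-line clause.

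The step I expect to be the crux is making the hypothesis of the theorem bite non-vacuously, i.e. guaranteeing that $p_4$ lies on the line carrying $A_B(\rayo{p_3,p_1},\rayo{p_3,p_2})$ for a system about which we do \emph{not} already know $\norm{p_3-p_1}=\norm{p_3-p_2}$. This is exactly what the conclusion ``$p_4\in A_B(\ldots)$'' of Lemma~\ref{le1}(5) supplies; without it the argument would be circular, since part~(3) alone only tells us that $p_4\in\gen{p_3,\tfrac{p_1+p_2}{2}}$ is equivalent to already knowing the two distances coincide. A secondary bookkeeping point to get right is the sign selection in the last paragraph, where the inequality $\norm{y}\le\lambda$ is what forces the ``$+$'' branch.
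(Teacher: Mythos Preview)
Your proof is correct and follows the same route as the paper's: invoke Lemma~\ref{le1}(5) to manufacture a $\C$-orthocentric system with $p_4$ on the Busemann bisector, apply the theorem's hypothesis to obtain $\norm{p_3-p_1}=\norm{p_3-p_2}$, then use Lemma~\ref{le1}(3) to identify $p_1,p_2$ explicitly and read off $x\bot_I ty$ with $t=1+\lambda/\norm{y}$. The one place you go beyond the paper is the sign selection in Lemma~\ref{le1}(3): the paper simply asserts the ``$+$'' branch, implicitly relying on the separation clause in part~(5) together with the segment version of part~(3), whereas you give an independent argument via $\norm{y}\le\lambda$ to rule out the ``$-$'' branch on the ray.
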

\begin{proof}
If $(M,\norm{\circ})$ is euclidean, any $\C$-orthocentric system
$\llaves{p_{1},p_{2},p_{3},p_{4}}$, with $p_{4}$ on the line containing
$A_{B}\parent{\rayo{p_{3},p_{1}},\rayo{p_{3},p_{2}}}$, holds the equality
$\norm{p_{3}-p_{1}}=\norm{p_{3}-p_{2}}$.
 
Conversely, let $x,y\in(M,\norm{\circ})$ with $x\bot_{I}y$. By the Lemma (\ref{lepre1}) we only
have to prove that there exist $t>1$ such that $x\bot_{I}ty$. Clearly it exists such $t$
if $x=O$ or $y=O$. Now, let $x,y\in M\smallsetminus\llaves{O}$ with $x\bot_{I}y$. Define
$\lambda=\norm{x+y}$ and
 \begin{equation*}
  p_{4}=y\hspace{1cm}p_{3}=-y\hspace{1cm}x_{1}=x\hspace{1cm}x_{2}=-x
 \end{equation*}
By (5) from Lemma (\ref{le1}) we have a $\C$-orthocentric system
$\llaves{p_{1},p_{2},p_{3},p_{4}}$ such that
\mbox{$p_{4}\in A_{B}\parent{\rayo{p_{3},p_{1}},\rayo{p_{3},p_{2}}}$}. Since
$\norm{p_{3}-p_{1}}=\norm{p_{3}-p_{2}}$, and the definition of Busemann angular
\mbox{bisectors}, then
\begin{equation*}
 A_{B}\parent{\rayo{p_{3},p_{1}},\rayo{p_{3},p_{2}}}=\rayo{p_{3},\frac{p_{1}+p_{2}}{2}}
\end{equation*}

Thus, by (3) from Lemma (\ref{le1}), we have $p_{1}=\frac{\lambda y}{\norm{y}}-x$ and
$p_{2}=\frac{\lambda y}{\norm{y}}+x$. Then,
\begin{equation*}
 \norm{x-\parent{1+\frac{\lambda}{\norm{y}}}y}=\norm{p_{3}-p_{1}}=\norm{p_{3}-p_{2}}=
 \norm{x+\parent{1+\frac{\lambda}{\norm{y}}}y}
\end{equation*}
and therefore $x\bot_{I}ty$, taking $t=1+\frac{\lambda}{\norm{y}}>1$.
\end{proof}


\end{document}